\documentclass[12pt,a4paper]{article}

\usepackage{amssymb}
\usepackage{amsmath}
\usepackage{indentfirst}
\usepackage{t1enc}
\usepackage{array}
\usepackage{graphpap}
\usepackage[mathscr]{eucal}
\usepackage{enumerate}
\usepackage{tabularx}
\usepackage{graphicx}
\usepackage[colorlinks=true,linkcolor=blue,citecolor=blue,
  urlcolor=blue]{hyperref}
\usepackage[hyphenbreaks]{breakurl}
\usepackage{amsthm}

\usepackage{tikz} 
\usepackage{pgfplots} 
\usetikzlibrary{shapes,arrows}

\usepackage[latin2,utf8]{inputenc}

\DeclareMathOperator{\Taxicab}{Taxicab}

\begin{document}

\theoremstyle{plain}
\newtheorem{thm}{Theorem}[section]
\newtheorem{cor}[thm]{Corollary}
\newtheorem{lem}[thm]{Lemma}
\newtheorem{dfn}[thm]{Definition}
\newtheorem{rmk}[thm]{Remark}
\newtheorem{cns}[thm]{Construction}
\newtheorem{exm}[thm]{Example}
\newtheorem{prs}[thm]{Proposition}
\newtheorem{prob}[thm]{Problem} 
\newtheorem{ntn}[thm]{Notation}
\newtheorem{conj}[thm]{Conjecture}
\newtheorem{prop}[thm]{Proposition}
\newtheorem{fel}[thm]{Exercise}

\newcommand{\gc}{\operatorname{gcd}}
\newcommand{\mc}{\mathcal}
\newcommand{\mr}{\mathscr}
\newcommand{\ab}[1]{\left\vert{#1}\right\vert}
\newcommand{\zj}[1]{\left({#1}\right)}
\newcommand{\norma}{N_{\mathbb F_{q^n}/\mathbb F_q}}
\newcommand{\normawp}{N_{\mathbb F_{p^n}/\mathbb F_p}}
\newcommand{\normfield}{\mc N_{\mathbb F_{q^n}/\mathbb F_q} (f)}

\renewcommand{\baselinestretch}{1.38}
\renewcommand\thefootnote{\relax}

\newcommand{\bs}{\bigskip}
\newcommand{\bi}{\bigskip\noindent}
\newcommand{\red}[1]{\textcolor{red}{#1}}
\newcommand{\lb}[1]{\label{#1}}
\allowdisplaybreaks[1] 

\normalsize

\sloppy


\title{Ramanujan, the taxicab problem for polynomials, and the abc-conjecture}

\author{Katalin Gyarmati}
\date{}

\footnotetext{\noindent 2020 Mathematics Subject 
Classification: Primary: 11C08.\\
\indent Keywords and phrases: polynomials, taxicab problem.\\
\indent Research supported by the Hungarian National Research Development and 
Innovation Fund KKP133819.}

\maketitle

\begin{abstract}
  Starting with \mbox{Ramanujan}'s famous taxicab problem, we can
  study the solvability of the equations $p^n+q^n=r^n+s^n$
  and, more generally, $p_1^{k_1}+\dots+p_m^{k_m}=0$ among polynomials.
\end{abstract}  

\section{Introduction}

The taxicab problem is one of the best-known anecdotes about \mbox{Ramanujan}.
Hardy
\cite{Hardy1}, \cite{Hardy2} wrote the following:
``I remember once going to see him when he was lying ill at Putney.
I had ridden in taxi cab number 1729 and remarked that the number seemed
to me rather a dull one, and that I hoped it was not an unfavorable omen.
‘No,’ he replied, ‘it is a very interesting number; it is the smallest number
expressible as the sum of two cubes in two different ways.’ ''

\bs However, Turán \cite{TP} provides another version of this anecdote,
according to which Hardy and \mbox{Ramanujan}
traveled together in the taxi, and when they got out,
Hardy forgot to retrieve his briefcase,
containing  important manuscripts.
Hardy was very distraught, but \mbox{Ramanujan} reassured him that
he remembered the taxicab number, as it was a very interesting number;
namely, it was the smallest number that could be written
in two different ways as the sum of two cubes.

\bs Regardless of which version of the anecdote is true, 
there is no doubt that in
the so-called ``Lost Notebook'' \cite{Rama}, \mbox{Ramanujan} found an
infinite number of examples of triples $x,y,z\in\mathbb Z^+$
for which
\begin{equation}
x^3+y^3=z^3\pm 1. 
\lb{ref01}
\end{equation}
Perhaps \mbox{Ramanujan} was unaware of Euler's proof of
the Fermat conjecture in the case of exponent $n=3$ and that he 
sometimes wanted
to prove the conjecture, while other times, he wanted to disprove it.
He almost succeeded in the latter endeavor: as in his triples,
the sum of the first two cubes is ``almost'' a cube.

\bs Hirschorn \cite{Hirs1}, \cite{Hirs2}, \cite{Hirs3}, as well as
many others,
tried to reconstruct the methods
\mbox{Ramanujan} originally used to find these triples. However, in this paper,
I use an important earlier result of Lehmer \cite{Lehmer}—one which
\mbox{Ramanujan} probably did not think about but which can be used to
find an infinite number of parametric solutions to equation
\eqref{ref01}.
According to this result, the equation
\[
  x^3+y^3=z^3+1
\]
has infinitely many solutions. For example:
\begin{align*}
  x&=    9t^4,\ \ y=    1+9t^3,\ \  z=    9t^4+3t.          
\end{align*}  
Similarly, the equation
\[
x^3+y^3=z^3-1
\]
has infinitely many solutions, namely:
\begin{align*}
  x= 9t^4-3t,\ \             
  y= 9t^3-1,\ \             
  z= 9t^4.              
\end{align*} 
(Lehmer also found additional solutions by studying Pell equations.)

\bs There is a related question: 
For $n\ge 4$, are there ``different'' polynomials $p,q,r,s$ such that
\[
p^n+q^n=r^n+s^n
\]
holds? Although it would be useful
to provide some concrete examples of such polynomials
(if such polynomials exist).
I could not solve this problem in general. Nevertheless,
I can state the following conjecture:

\begin{conj}\lb{conj1.1}
  There do not exist polynomials $p,q,r,s\in\mathbb Z[x]$
  such that 
  \[
  p^n+q^n=r^n+s^n\ne 0
  \]
  with an integer $n\ge 5$
  and $\{p^n,q^n\}\ne\{r^n,s^n\}$.
\end{conj}

Here, I note that Granville and Tucker \cite{GrTuck}
wrote an excellent expository paper on a related problem,
namely the extension of Fermat's conjecture to polynomials.

\section{A few extra conditions}

When writing a paper, the first thing to remember is that
if we cannot prove a theorem in its original form, a few conditions
can be applied
to get a slightly weaker (and easier to prove) form.
That’s just what I did, and in doing so, I was able to prove the following:

\begin{thm}\lb{thm2.1}
  If $n\ge 16$, there do not exist polynomials
  $p,q,r,s\in\mathbb C[x]$
  such that the equation
 \begin{equation}
  p^n+q^n=r^n+s^n\ne 0\lb{foegy}
 \end{equation}
 holds with $\{p^n,q^n\}\ne \{r^n,s^n\}$,
 $\max\{\deg p,\deg q,\deg r,\deg s\}\ge 1$ and
  $\gc (p,q,r,s)=1$.
\end{thm}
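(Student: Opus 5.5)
The plan is to reduce \eqref{foegy} to a polynomial $abc$-type inequality for a vanishing sum of four terms. I would use the Wronskian method underlying the Mason--Stothers theorem, in its several-term form due to Voloch and Brownawell--Masser. Write
\[
f_1=p^n,\quad f_2=q^n,\quad f_3=-r^n,\quad f_4=-s^n,
\]
so that $f_1+f_2+f_3+f_4=0$, and put $D=\max\{\deg p,\deg q,\deg r,\deg s\}\ge 1$. The result I would invoke is the following. If $f_1+\dots+f_m=0$ with $\gc(f_1,\dots,f_m)=1$ and no proper subsum vanishing, then
\[
\max_i \deg f_i\le \frac{(m-1)(m-2)}{2}\bigl(n_0(f_1\cdots f_m)-1\bigr),
\]
where $n_0$ denotes the number of distinct zeros. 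If the paper prefers a self-contained argument, I would instead prove a weaker version of this directly from the Wronskian $W(f_1,f_2,f_3)$, using the fact that each $f_i$ is an $n$-th power. The weaker version would give a constant like $4$ or $5$ in place of $3$, and I suspect such a weaker constant is the source of the threshold $n\ge 16$.

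First I would dispose of the vanishing subsums.

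\textbf{Two-term subsum.} If some $f_i+f_j=0$, then the complementary pair also sums to zero. The case $p^n+q^n=0$ forces the total to be $0$, which \eqref{foegy} excludes. The cases $p^n=r^n$ (resp.\ $p^n=s^n$) give $q^n=s^n$ (resp.\ $q^n=r^n$), so $\{p^n,q^n\}=\{r^n,s^n\}$, again excluded.

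\textbf{Three-term subsum.} If three of the $f_i$ sum to zero, the fourth is $0$. Say $s=0$; then $p^n+q^n=r^n$ with $\gc(p,q,r)=1$, and the equation makes $p,q,r$ pairwise coprime. Mason--Stothers then gives $nD\le 3D-1$, which is impossible for $n\ge 3$ once $D\ge 1$. If $p,q,r$ were all constant we would have $D=0$, contradicting $\max\deg\ge 1$.

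In the remaining case no subsum vanishes, and $\gc(f_1,\dots,f_4)=1$ follows from $\gc(p,q,r,s)=1$.

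For the main estimate, the left side of the inequality is $\max_i\deg f_i=nD$. The key observation is that $n_0(f_1f_2f_3f_4)=n_0(pqrs)\le 4D$, because raising to the $n$-th power adds no new roots. Hence
\[
nD\le 3(4D-1)<12D,
\]
so $n<12$, which contradicts $n\ge 16$. With a weaker constant $c$ in place of $3$ one gets $n<4c$, and this is exactly where the constant matters. One caveat: some $f_i$ may be nonzero constants. This is harmless, since constants contribute no zeros, and at least one polynomial has positive degree.

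I expect the main obstacle to be proving, or citing precisely, the four-term Wronskian inequality with a good enough constant. The direct route goes as follows. By linear independence (which follows from the absence of vanishing subsums, or else one reduces to fewer terms), $W=W(f_1,f_2,f_3)\ne 0$, and $W$ equals, up to sign, each of the other Wronskians obtained by replacing one $f_i$ with $f_4$. At a root $\alpha$ of $p$ of multiplicity $e$, the factor $f_1=p^n$ vanishes to order $ne$, while $W/(f_1f_2f_3)$ has a pole of order at most $2$ there. Counting zeros of $W$ from each factor then yields $\deg(f_1f_2f_3)-\deg W\le 2\,n_0$-type bounds. Comparing this with $\deg W\le \sum\deg f_i-3$ should give the claimed inequality. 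The bookkeeping of multiplicities, and making sure all four polynomials' roots are counted only once, is where care is needed.
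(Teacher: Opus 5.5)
Your argument is correct, and it takes a genuinely different route from the paper.

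\textbf{The paper's route.} The paper never works with a four-term relation. It divides by $s^n$ and differentiates, which gives the three-term identity $p^{n-1}(p's-ps')+q^{n-1}(q's-qs')=r^{n-1}(r's-rs')$. Each term is, up to the factor $ns^{n-1}$, a $2\times 2$ Wronskian against $s^n$. It then removes the common factor $d$: since $\gc(p,q,r)=1$, $d$ divides the product of the three Wronskians, so $\deg d\le 6k-3$. Finally it applies three-term Mason with $r^*\le 9k-3$. These two losses, $6k$ and $9k$, produce the threshold $16$; it does not come from a weakened constant $4$, as you guessed. The degenerate case is handled there by noting that if one of $p,q,r$ is a constant multiple of $s$, the identity collapses to a polynomial Fermat equation.

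\textbf{Your route.} You apply the four-term Brownawell--Masser/Voloch inequality (constant $\binom{3}{2}=3$) directly to $p^n+q^n-r^n-s^n=0$. Together with $n_0(p^nq^nr^ns^n)\le 4D$ this gives $nD\le 3(4D-1)$, so the theorem already holds for $n\ge 12$. Your route buys a better threshold, in the spirit of the paper's remark that de Bondt's theorem implies Theorem 2.1. The price is a multi-term $abc$ theorem, whereas the paper needs only three-term Mason plus elementary gcd bookkeeping.

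Two points in your write-up need repair if you go the self-contained way.

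First, linear independence of $p^n,q^n,r^n$ does \emph{not} follow from the absence of vanishing subsums. For example, $p^n=c\,r^n$ with $c\ne 1$, or $\alpha p^n+\beta q^n=\gamma r^n$ with arbitrary nonzero constants, are not excluded. Your branch ``or else one reduces to fewer terms'' is therefore genuinely needed. It works because absorbing $n$-th roots of the coefficients turns any such dependence into a Fermat equation $A^n+B^n=C^n$ with $\gc(A,B,C)=1$ (use $\gc(p,q,r)^n\mid s^n$). Mason then rules this out exactly as in your three-term case; this is essentially the paper's first step.

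Second, in the Wronskian sketch, $W/(f_1f_2f_3)$ can have a pole of order $3$, not $2$, at a common root of several $f_i$, since the $f_i$ need not be pairwise coprime. Also, the two inequalities you write, $\deg(f_1f_2f_3)-\deg W\le 2n_0$ and $\deg W\le\sum\deg f_i-3$, do not combine to bound $\max\deg f_i$. The correct bookkeeping uses the sign-invariance you noted, in two different ways:
\begin{itemize}
\item At each root $\alpha$, pick $j$ with $f_j(\alpha)\ne 0$ and use $W=\pm W(f_i:i\ne j)$. This gives $\mathrm{ord}_\alpha W\ge \mathrm{ord}_\alpha(f_1f_2f_3f_4)-3$, hence $\deg(f_1f_2f_3f_4)-\deg W\le 3n_0$.
\item For the degree, use the Wronskian omitting the $f_j$ of maximal degree. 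This gives $\deg W\le \deg(f_1f_2f_3f_4)-\max_j\deg f_j-3$.
\end{itemize}
Together these yield $\max_j\deg f_j\le 3(n_0-1)$, needing only $\gc(f_1,\dots,f_4)=1$.
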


\bi In special cases, this theorem was already known.
For example,
Newman and Slater \cite[p. 481]{NewmanSlater} proved
the theorem in 1979 using Wronskians in the special case where one of
these polynomials is constant.
Later, in 2004,
Bayat and Teimorii \cite{Bayat} handled a case where the
polynomials in the theorem are pairwise coprime. However,
using a little less Wronskians in Bayat and Teimorii’s proof and adding
a little more gcd makes the proof simpler and more general.
Theorem \ref{thm2.1} follows from a theorem of de Bondt \cite{Bondt}
(see also section Remarks) and from Theorem \ref{thm2.3} of this paper.
Although Theorem \ref{thm2.1} can be obtained as a special case from more general theorems, I believe it is important to provide a simple and direct proof. This direct proof serves as the starting point for the proof of Theorem \ref{thm2.3}.

\bi Theorem \ref{thm2.1} does not apply when all polynomials
are constant. In this case the best result is due to Elkies \cite{Elk},
who only proved that if $2\le A,B,C\le 8,388,608 \in\mathbb N$
and $4\le n\le 2^{23}\in\mathbb N$,
then the equation
\[
A^n+B^n=C^n\pm 1,
\] 
has no solution.

\bi Theorem \ref{thm2.1} is strongly related to the generalized
taxicab problem.

\begin{dfn} Let $\Taxicab (n,k,j)$ denote the smallest positive integer,
  that can be written at least in $j$ ways as the sum of $k$ pieces of
  $n$-th powers.
\end{dfn}

\bi So far, we know very little about generalized taxicab numbers.
According to our theorem, the existence of generalized taxicab numbers
$\Taxicab (n,2,j)$
for $n\ge 16$ and $j\ge 2$
cannot be proved with a simple polynomial construction.
A crucial question is whether Theorem \ref{thm2.1} can be extended to include
more summands and polynomials with different exponents.
Using Wronskians, the following can be proved.
\begin{thm}\lb{thm2.3}
  Suppose that $m\ge 3$ is an integer and for the nonzero
  polynomials $p_1,p_2,\dots,p_m\in\mathbb C[x]$
  and positive integers $k_1,k_2,\dots,k_m\in\mathbb N$
  the following conditions hold:
\begin{align}
\gcd(p_1^{k_1}, p_2^{k_2},\dots,p_m^{k_m}) = 1, \lb{ll01}
\end{align}
\[
p_1^{k_1} + p_2^{k_2} + \ldots + p_m^{k_m} = 0,
\]
and any $m-1$ of the polynomials $p_i^{k_i}$ are linearly
independent. Then,
\begin{align}
  \min \{k_1,k_2,\dots,k_m\}<
  \dfrac{1}{3}\zj{m^3+7m^2-49m+68}.
  \lb{ll02}
\end{align}
\end{thm}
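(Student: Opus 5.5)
We plan to use the Wronskian method in the style of Newman--Slater and Bayat--Teimorii, with a careful accounting of multiplicities. Put $f_i=p_i^{k_i}$, $d_i=\deg f_i=k_i\deg p_i$, and order the indices so that $d_m=\max_i d_i=:D$. Since any $m-1$ of the $f_i$ are linearly independent, the Wronskian
\[
W=W(f_1,\dots,f_{m-1})
\]
is not identically zero. Because $f_m=-(f_1+\dots+f_{m-1})$, column operations show that, up to sign, $W$ equals the Wronskian of any $m-1$ of the $f_i$. We will use this symmetry to divide $W$ by high powers of every $p_i$.

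The key divisibility statement is the following. If $\alpha$ is a root of $p_i$ of multiplicity $e$, then $f_i$ vanishes at $\alpha$ to order $ek_i$. Expanding the Wronskian of the $m-1$ functions $f_j$ with $j\ne i$ (the other index omitted), we see that $f_i$ and its first $m-2$ derivatives are divisible by $(x-\alpha)^{ek_i-(m-2)}$. Hence $p_i^{k_i-(m-2)}$ divides $W$. By \eqref{ll01}, distinct $p_i$ may share roots, so we cannot simply multiply these divisors together. To handle this, we bound the order of vanishing of $W$ at a point $\alpha$ more carefully. Consider the orders $v_j=\mathrm{ord}_\alpha f_j$. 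At least one $v_j$ is zero, and the functions can be replaced by combinations with distinct orders of vanishing. A standard lemma then gives
\[
\mathrm{ord}_\alpha W\ \ge\ \sum_{j\neq \ell}v_j-\binom{m-1}{2}\cdot(\text{correction}),
\]
where $\ell$ is chosen with $v_\ell=0$. This should be summable over all roots and yield
\[
\deg W\ \ge\ \sum_{i=1}^{m}\bigl(d_i-(m-2)\,n_i\bigr)-d_{\ell}\text{-type terms},
\]
where $n_i$ is the number of distinct roots of $p_i$. Since $n_i\le\deg p_i=d_i/k_i$, each term is at least $d_i\bigl(1-\frac{m-2}{k}\bigr)$ with $k=\min k_i$.

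On the other side, the upper bound $\deg W\le (m-1)D-\binom{m-1}{2}$ comes from expanding the determinant.

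We then compare the two bounds. We may keep the two largest-degree terms in the sum, and the relation $f_m=-\sum f_i$ forces at least two of the $d_i$ to equal $D$. The comparison gives an inequality of the form
\[
D\,\Bigl(c_1(m)-\frac{c_2(m)}{k}\Bigr)\le c_3(m)D,
\]
which is violated when $k$ exceeds a cubic polynomial in $m$. Optimizing the choice of which Wronskian to use, and taking into account that linear independence holds only for subsets of size $m-1$, should lead to the cubic $\frac13(m^3+7m^2-49m+68)$. As a consistency check, for $m=4$ this value is $16$, matching the bound $n\ge16$ in Theorem \ref{thm2.1}.

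The main obstacle is the common-root analysis: the $p_i$ are coprime only as a whole family, not pairwise. I expect the cubic dependence on $m$ to come from this step. At a common root of several $p_i$, the loss in $\mathrm{ord}_\alpha W$ can be as large as roughly $\binom{m-1}{2}$ rather than $m-2$. I would try first to bound the number of such common roots by $\sum\deg p_i$ and absorb the resulting loss; presumably the cubic arises exactly here. If this absorption fails, I would try induction on $m$: split the family at a common root into sub-sums that are themselves linearly dependent.
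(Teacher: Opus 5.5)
Your sketch does not close, for two concrete reasons.

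\textbf{The comparison step cannot produce a contradiction.} You use $\deg W\le (m-1)D-\binom{m-1}{2}$ together with a lower bound of size $\sum_i d_i\bigl(1-\frac{m-2}{k}\bigr)$. Only two of the $d_i$ are forced to equal $D$, and nothing prevents $d_3,\dots,d_m$ from being small compared with $D$. Keeping the two largest terms, your inequality then reads essentially $2D\bigl(1-\frac{m-2}{k}\bigr)\le (m-1)D$, which holds for every $k$ when $m\ge3$. So you never get $D(c_1-c_2/k)\le c_3D$ with $c_1>c_3$. You need the sharp upper bound. Since $W$ is, up to sign, the Wronskian of any $m-1$ of the $f_j$, omit an index $\ell'$ with $d_{\ell'}=D$. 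This gives $\deg W\le\sum_j d_j-D-\binom{m-1}{2}$, so that $\sum_j d_j$ cancels against the lower bound.

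\textbf{The lower bound is never established.} The ``correction'' and the ``$d_\ell$-type terms'' are left unspecified. The plan you describe charges a loss of about $\binom{m-1}{2}$ at each distinct root of $p_1\cdots p_m$, with at most $\sum_i\deg p_i$ such roots. Even with the sharp upper bound, this yields only $k<m\binom{m-1}{2}=\frac12m(m-1)(m-2)$. That exceeds $\frac13(m^3+7m^2-49m+68)$ for every $m\ge18$, so this route does not prove the statement. The closing ``optimizing \dots\ should lead to the cubic'' is not an argument.

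\textbf{The fix.} The common roots are actually harmless, and your first observation already contains the repair. Given a root $\alpha$, choose $\ell$ with $f_\ell(\alpha)\ne0$ (possible by \eqref{ll01}) and expand $W=\pm W(f_j:j\ne\ell)$ by Leibniz. Every $f_j$ vanishing at $\alpha$ occupies its own row, so each term has order at least $\sum_{f_j(\alpha)=0}\bigl(\mathrm{ord}_\alpha f_j-(m-2)\bigr)$. The loss is therefore at most $m-2$ per $f_j$ vanishing at $\alpha$. It is paid for by $\sum_j r^*(p_j)$, not by the number of distinct roots of the product. Pulling factors out of different rows needs no coprimality, so in fact $\prod_j p_j^{k_j-m+2}\mid W$ for $k_j\ge m-2$.

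Summing over $\alpha$ gives $\deg W\ge\sum_j d_j-(m-2)\sum_j r^*(p_j)$. Combined with the sharp upper bound,
\[
D+\tbinom{m-1}{2}\le(m-2)\sum_j r^*(p_j)\le(m-2)\sum_j d_j/k\le m(m-2)D/k,
\]
so $k<m(m-2)$. This lies below $\frac13(m^3+7m^2-49m+68)$ for all $m\ge3$, since the difference is $\frac13(m^3+4m^2-43m+68)>0$. So it proves the theorem.

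\textbf{Comparison with the paper.} The repaired argument is a genuinely different and stronger route; it is essentially the de Bondt-type bound of Theorem \ref{thm6.1}. The paper instead takes the $(m-2)\times(m-2)$ Wronskian of $a_1+\dots+a_m,a_4,\dots,a_m$ to obtain a three-term identity. It divides out $\widetilde w=\gcd(p_1^{k_1-m+3},p_2^{k_2-m+3},p_3^{k_3-m+3})$, bounds $\deg\widetilde w$ by the iterated Wronskian argument of Lemma \ref{lemw}, and then applies Mason's theorem. The cubic term comes from $\deg\widetilde w$. Your repaired argument needs neither Mason's theorem nor Lemma \ref{lemw}. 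It also sidesteps an arithmetic slip there: the sum $\sum_{i=0}^{m-4}i(i+1)$ equals $\frac13(m-2)(m-3)(m-4)$, not the stated $\frac13(m-2)(m-4)(m-5)$.
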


An interesting question is whether condition \eqref{ll01} can be omitted
from the theorem. The answer is negative, due to the following reason:
Let's assume that the exponents $k_1, k_2, \ldots, k_m$ are pairwise coprime
and for the polynomials $f_1,f_2,\dots,f_m$ we have
\[
f_1 + f_2 + \ldots + f_m = 0,
\]
where $f_1, \ldots, f_m$ are not all constant polynomials.

By the Chinese Remainder Theorem, there exist integers $\alpha_i$ such that $k_1 k_2 \ldots k_{i-1} k_{i+1} \ldots k_m | \alpha_i$ and $k_i | \alpha_{i}+1$. Multiplying the equation by $f_1^{\alpha_1} f_2^{\alpha_2} \ldots f_m^{\alpha_m}$ yields:
\[
f_1^{\alpha_1+1} f_2^{\alpha_2} \ldots f_m^{\alpha_m} + \ldots + f_1^{\alpha_1} \ldots f_{m-1}^{\alpha_{m-1}} f_m^{\alpha_m+1} = 0.
\]
Now, define the polynomials $p_j\in\mathbb C[x]$ as follows:
\[
p_j^{k_j} \stackrel{\textup{def}}{=} f_1^{\alpha_1} \ldots f_{j-1}^{\alpha_{j-1}}f_j^{\alpha_j+1} f_{j+1}^{\alpha_{j+1}} \ldots f_m^{\alpha_m}.
\]
This construction gives a sum of the form $p_1^{k_1} + \dots + p_m^{k_m} = 0$, but with $\gcd(p_1^{k_1}, \dots, p_m^{k_m}) \neq 1$. It is clear that if $\min\{k_1, \dots, k_m\}$ is sufficiently large, inequality \eqref{ll02} is not satisfied.

\bigskip
The following can be easily deduced from Theorem \ref{thm2.3}:

\begin{cor}\lb{cor2.4}
  Suppose that $m\ge 3$ is an integer and for the nonzero
  polynomials $p_1,p_2,\dots,p_m\in\mathbb C[x]$
  and positive integer $k$
  the following hold:
\[
p_1^{k} + p_2^{k} + \ldots + p_m^{k} = 0,
\]
and the quotient of any two different polynomials $p_i$ is never
constant. Then,
\begin{align*}
  k<
  \dfrac{1}{3}\zj{m^3+7m^2-49m+68}.
\end{align*}
\end{cor}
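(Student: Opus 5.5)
We will deduce Corollary \ref{cor2.4} from Theorem \ref{thm2.3} by passing to a minimal vanishing subsum. Theorem \ref{thm2.3} needs two hypotheses that the corollary does not assume: the gcd condition \eqref{ll01}, and linear independence of any $m-1$ of the summands. Minimality will supply the independence, and dividing out a common factor will supply the gcd condition.

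\textbf{Step 1 (minimal relation).} Since $\sum_{i=1}^m p_i^k=0$, the polynomials $p_1^k,\dots,p_m^k$ are linearly dependent over $\mathbb C$. Among all nontrivial relations $\sum_{i\in S} c_i p_i^k=0$ with every $c_i\neq 0$, choose one whose support $S\subseteq\{1,\dots,m\}$ has minimal size $m'=|S|$. Then $m'\le m$.

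\textbf{Step 2 (the support has at least three elements).} We have $m'\neq 1$, since each $p_i$ is nonzero. Suppose $m'=2$, so $c_ip_i^k+c_jp_j^k=0$ with $i\neq j$. Then $(p_i/p_j)^k$ is constant. A rational function whose $k$-th power is constant is itself constant, so $p_i/p_j$ is constant, which the hypothesis forbids. Hence $m'\ge 3$.

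\textbf{Step 3 (absorbing coefficients and the gcd).} Because $\mathbb C$ is algebraically closed, we can write $c_i=\gamma_i^k$ and set $q_i=\gamma_i p_i$ for $i\in S$. This gives $\sum_{i\in S}q_i^k=0$ with all $q_i$ nonzero.

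Any $m'-1$ of the $q_i^k$ are linearly independent. Otherwise a nontrivial relation among them would have support strictly smaller than $S$, contradicting the minimality of $m'$.

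Next let $d=\gcd_{i\in S}(q_i)$ and replace each $q_i$ by $q_i/d$. This divides the relation by $d^k$, so it still holds. It preserves linear independence and leaves the quotients $q_i/q_j$ unchanged. Afterwards $\gcd_{i\in S}(q_i)=1$. Since $C[x]$ is a UFD, this gives $\gcd_{i\in S}(q_i^k)=\bigl(\gcd_{i\in S} q_i\bigr)^k=1$, which is condition \eqref{ll01}.

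\textbf{Step 4 (applying Theorem \ref{thm2.3} and comparing bounds).} Theorem \ref{thm2.3} now applies with $m'$ terms and all exponents equal to $k$. It gives $k<\tfrac13 f(m')$, where
\[
f(t)=t^3+7t^2-49t+68 .
\]
It remains to show $f(m')\le f(m)$. The derivative is $f'(t)=3t^2+14t-49$, and $f'(3)=26>0$. Since $f'$ is increasing for $t\ge 3$, $f$ is increasing on $[3,\infty)$; indeed $f(3)=11$, $f(4)=48$, $f(5)=123$. Because $3\le m'\le m$, we get $f(m')\le f(m)$, and therefore $k<\tfrac13(m^3+7m^2-49m+68)$.

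The only delicate point is Step 3. There, minimality of the support is what guarantees that every $(m'-1)$-subset of summands is linearly independent. Dividing by the gcd must be checked not to break the relation, the independence, or the nonconstant-quotient property, and it does not.
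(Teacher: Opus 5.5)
Your proof is correct and follows the paper's argument: you pass to a shortest vanishing linear combination, absorb the coefficients as $k$-th roots, divide out the gcd, and apply Theorem \ref{thm2.3} with $s\le m$ terms, and you also justify $s\ge 3$ and the monotonicity of $t\mapsto t^3+7t^2-49t+68$ on $[3,\infty)$, which the paper leaves implicit. One trivial arithmetic slip: $f'(3)=27+42-49=20$, not $26$, which changes nothing since it is still positive.
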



\section{The generalized abc-conjecture}

The main tool in the proof of Theorem
\ref{thm2.1} and Theorem \ref{thm2.3} is Mason's theorem,
which has many generalizations (see, e.g., \cite{Br},
\cite{Hu1}, \cite{Hu2}, \cite{Masonfunction}, \cite{Silverman}, \cite{Voloch}
and \cite{Zannier}). However, most of the results rely on function
fields, which I decided to avoid for the sake of clarity.
Fortunately, there is a generalization that uses polynomial
rings. We introduce the following notation: 
For an arbitrary (potentially multivariable) polynomial $p$,
let its unique factorization into irreducible polynomials be
\[
p=p_1^{\alpha_1}p_2^{\alpha_2}\cdots p_r^{\alpha_r},
\]
where the irreducible polynomials $p_i$ are pairwise coprime, and
let $\textup{rad }p$ denote the following polynomial: 
\[
\textup{rad }p\stackrel{\textup{def}}{=}p_1p_2\dots p_r. 
\]
Here $\textup{rad}\ p$ is unique apart from a constant factor.
Moreover, let $r^*(p)$ denote the degree of the polynomial $\textup{rad }p$.
The following lemma is due to Shapiro and Sparer \cite{ShapiroSparer}.

\begin{lem}
  If the (possibly multivariate)
  polynomials $f_1,f_2,\dots,f_m$ over $\mathbb C$ are pairwise coprime,
  not all of them are
  constant and
  \[
  f_1+f_2+\dots+f_m=0,
  \]
  then
  \[
  \max_{1\le i\le m}\operatorname{deg} f_i
  \le (m-2)\zj{r^*(f_1f_2\dots f_m)-1}.
  \]
\end{lem}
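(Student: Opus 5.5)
The plan is to reduce to one variable and then run the Wronskian argument behind Mason's theorem, choosing the basis for the Wronskian so that the polynomial of maximal degree is left out.

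\emph{Reduction to one variable.} For multivariate $f_i$, substitute $x_j=a_j+b_jt$ for generic complex $a_j,b_j$, restricting to a generic line. For generic parameters this should preserve the total degree of every $f_i$, pairwise coprimality, and squarefreeness of $\textup{rad}(f_1\cdots f_m)$, so that $r^*$ does not increase. It also keeps the relation $\sum f_i=0$ and keeps some $f_i$ nonconstant. The required genericity follows from nonvanishing of leading forms and of resultants/discriminants, a Bertini-type fact. I will treat this step briefly and assume from now on that $f_i\in\mathbb C[t]$.

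\emph{Setup.} Fix $k$ with $\deg f_k=\max_i\deg f_i$, and let $V$ be the $\mathbb C$-span of $f_1,\dots,f_m$, with $d=\dim V$. Since $f_k=-\sum_{i\ne k}f_i$, the $f_i$ with $i\ne k$ already span $V$, so $d\le m-1$, and I choose a basis $g_1,\dots,g_d$ of $V$ from among them. If $d=1$, all $f_i$ are proportional. Being pairwise coprime, they are then constants, which contradicts the hypothesis, so $d\ge2$. Let $W=W(g_1,\dots,g_d)$. This is nonzero by linear independence, and
\[
\deg W\le \sum_{l}\deg g_l-\tfrac{d(d-1)}{2}\le \sum_{i\ne k}\deg f_i-\tfrac{d(d-1)}{2}.
\]

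\emph{Divisibility.} By the exchange lemma, every nonzero $f_j$ can be put into some basis of $V$ drawn from the $f_i$'s. The Wronskian of a different basis of $V$ differs from $W$ only by a nonzero constant factor. So for each $j$ I may compute $W$ using a basis containing $f_j$. If $a$ is a root of $f_j$ of multiplicity $e$, the column of derivatives $f_j,f_j',\dots,f_j^{(d-1)}$ is divisible by $(t-a)^{e-d+1}$, hence so is $W$. Because the $f_j$ are pairwise coprime, distinct $j$ contribute distinct roots. Therefore
\[
\prod_j f_j \ \Big|\ W\cdot(\textup{rad}\,f_1\cdots f_m)^{d-1},
\]
which gives $\sum_{j=1}^m\deg f_j-(d-1)r^*\le\deg W$.

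\emph{Conclusion.} Combining the two bounds and cancelling the terms $\deg f_i$, $i\ne k$, gives
\[
\deg f_k\le (d-1)r^*-\tfrac{d(d-1)}{2}\le (d-1)(r^*-1)\le (m-2)(r^*-1),
\]
using $d\ge2$ and $d\le m-1$; here $r^*\ge1$ because some $f_i$ is nonconstant.

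The main obstacle is the linearly dependent case, which the lemma does not exclude. The exchange/basis-change observation above handles it: it avoids passing to minimal vanishing subsums, where the degree of the omitted polynomials would not be controlled. The rest is routine, apart from justifying the generic-line reduction cleanly.
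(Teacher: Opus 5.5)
The paper gives no proof of this lemma. It is quoted from Shapiro and Sparer, so there is no in-paper argument to compare with, and your proposal has to stand on its own. It does: the proof is correct.

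\emph{The Wronskian argument.} The following steps all check out:
\begin{enumerate}
\item You choose the basis $g_1,\dots,g_d$ among the $f_i$ with $i\ne k$.
\item The exchange lemma puts each $f_j$ into some basis drawn from the $f_i$, and changing basis alters $W$ only by a nonzero constant.
\item By pairwise coprimality, a root of $f_j$ of multiplicity $e$ occurs in $W\cdot(\textup{rad}\,f_1\cdots f_m)^{d-1}$ with exponent at least $\max(0,e-d+1)+(d-1)\ge e$.
\item The bound $\deg W\le\sum_l\deg g_l-\binom{d}{2}$ is right.
\end{enumerate}
The one unstated point is that no $f_j$ is $0$; otherwise $\prod_j f_j=0$ and the divisibility says nothing. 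This holds because $f_j=0$ would force every other $f_i$ to be a unit, so all of them would be constant.

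\emph{What your route gains.} Your argument proves more than the lemma, namely $\max_i\deg f_i\le (d-1)r^*-\binom{d}{2}$, and this is sharp. For $(x+1)^3-(x-1)^3-6x^2-2=0$ one has $d=3$, $r^*=3$ and maximal degree $3$, whereas the lemma only gives $4$. The argument is also more direct than the paper's own use of Wronskians in Theorems~\ref{thm2.1} and~\ref{thm2.3}. There they only produce a three-term identity, to which Mason's theorem is then applied; in Theorem~\ref{thm2.3}, linear independence of any $m-1$ of the terms is assumed. Your exchange-lemma device handles linear dependence without any such hypothesis.

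\emph{The reduction to one variable.} This is the only step needing a firmer justification, and it is easier than you indicate.
\begin{itemize}
\item Squarefreeness is irrelevant. If $F=\prod_i f_i=c\prod_j u_j^{\alpha_j}$ with $u_j$ irreducible, then $\textup{rad}(F|_L)$ divides $\prod_j u_j|_L$. So $r^*$ can only drop under restriction, which is the direction you need.
\item Total degrees are preserved as soon as $b$ is not a zero of the top-degree homogeneous part of any $f_i$.
\item For coprimality, let $f_i,f_j$ be coprime and $Z=\{f_i=f_j=0\}\subset\mathbb C^n$. Then $Z$ has codimension at least $2$: a component of dimension $n-1$ would be the zero set of an irreducible factor of $f_i$, which would then divide $f_j$ by the Nullstellensatz. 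The set of $(a,b)\in\mathbb C^{2n}$ whose line $a+bt$ meets $Z$ is the image of $Z\times\mathbb C^n\times\mathbb C$ under $(z,b,t)\mapsto(z-bt,b)$. This has dimension at most $(n-2)+n+1=2n-1$, so it lies in a proper Zariski-closed set. Off that set, $f_i|_L$ and $f_j|_L$ have no common root and are therefore coprime in $\mathbb C[t]$.
\end{itemize}
No resultants, discriminants or Bertini-type statements are needed.
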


The following theorem follows easily from this lemma. However,
I have omitted the proof since there is not enough space to
provide it here.

\begin{thm}\lb{thm3.5}
  If the polynomials $f_1,f_2,\dots f_{k},g_1,g_2,\dots,g_k \in\mathbb C[x]$
  are pairwise coprime, and at least one of them is not constant,
  then for $n\ge 4k(k-1)$ the equation
\[
f_1^n+f_2^n+\dots +f_k^n=g_1^n+g_2^n+\dots +g_k^n
\]
never holds.
\end{thm}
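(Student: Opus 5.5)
We apply the Shapiro--Sparer lemma with $m=2k$ to the $2k$ polynomials
\[
f_1^n,\ \dots,\ f_k^n,\ -g_1^n,\ \dots,\ -g_k^n ,
\]
whose sum is zero by the assumed equation. They are pairwise coprime because the $f_i,g_j$ are, and raising to the $n$-th power and changing sign preserves coprimality. Suppose first that not all of these powers are constant, which holds because at least one of $f_i,g_j$ is nonconstant. Then the lemma gives
\[
n\cdot\max_{i,j}\{\deg f_i,\deg g_j\}\le (2k-2)\bigl(r^*(f_1^n\cdots f_k^n g_1^n\cdots g_k^n)-1\bigr).
\]

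Next we bound the radical. The radical of a product of $n$-th powers equals the radical of $f_1\cdots f_kg_1\cdots g_k$. Hence $r^*\le \sum_i\deg f_i+\sum_j\deg g_j\le 2kD$, where $D=\max\{\deg f_i,\deg g_j\}\ge 1$. This yields
\[
nD\le (2k-2)(2kD-1)<4k(k-1)D,
\]
so $n<4k(k-1)$, which contradicts $n\ge 4k(k-1)$. The strict inequality comes from the $-1$ together with $k\ge 2$. For $k=1$ the bound reads $n\ge 0$, and the claim is that $f_1^n=g_1^n$ with $f_1,g_1$ coprime and one of them nonconstant is impossible. This holds directly: $f_1^n=g_1^n$ forces $f_1=\zeta g_1$, and coprimality then makes both constant. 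The lemma also covers this case, since with $m=2$ it gives $\max\deg\le 0$.

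The main obstacle I expect is a degeneracy that the lemma's hypotheses do not exclude. Pairwise coprimality allows several of the polynomials to be nonzero constants, since constants are coprime to everything. It also allows some $f_i$ or $g_j$ to be zero, because $\gcd(0,c)=1$ for a nonzero constant $c$. I would handle zeros first: a zero term is simply dropped, which leaves fewer terms with $m'\le 2k$, and then $(m'-2)\le 2k-2$ keeps the bound valid. If at most one term remains, the remaining equation forces it to vanish, so everything is zero or constant and no nonconstant polynomial survives, a contradiction. Repeated constants do not hurt, because the lemma as stated only requires pairwise coprimality and at least one nonconstant term. Thus the only care needed is to verify the hypotheses after discarding zero terms and to track the count $m$ in the factor $(m-2)$.
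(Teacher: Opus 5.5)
Your proposal is correct and follows the route the paper has in mind. The paper omits the proof, saying only that the theorem ``follows easily'' from the Shapiro--Sparer lemma, and applying that lemma with $m=2k$ to $f_1^n,\dots,f_k^n,-g_1^n,\dots,-g_k^n$ together with $r^*\le 2kD$ gives exactly the threshold $(2k-2)\cdot 2k=4k(k-1)$.

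Two small remarks. Your handling of zero terms is harmless but unnecessary: under pairwise coprimality a zero entry forces every other entry to be a nonzero constant, which contradicts the nonconstancy hypothesis. Also, for $k=1$ your argument tacitly assumes $n\ge 1$, since for $n=0$ the equation reduces to $1=1$ and holds trivially.
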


In fact, this theorem is related to Corollary \ref{cor2.4},
with the difference that the pairwise
coprimality condition is not required there, while
using Corollary \ref{thm2.3}, the statement of this theorem would
only follow in case of
$n\gg k^3$.

\bs Motivated by Ruzsa's talk \cite{R}, I propose the following proposition.

\begin{prop}
Let $\mathcal{F}$ be the following family of triples of integers:
\[
  \mathcal{F}=\{(a,b,c):\ a=(2^{\alpha}-1)^2(2^{\alpha+2}-1),
  \ b=(3\cdot2^{\alpha}-1)^2,\ c=2^{3\alpha+2})\}.
\]
Then, the elements of the set $\mathcal{F}$ satisfy the $abc$-conjecture,
namely, for all $\varepsilon>0$ there exists a
constant $K_{\varepsilon}$ such that if $(a,b,c)\in\mathcal{F}$, then $a+b=c$ and

\[
c<K_{\varepsilon} (\textup{rad }(abc))^{1+\varepsilon}.
\]
\end{prop}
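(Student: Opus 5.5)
The plan is to set $x=2^{\alpha}$ and prove $a+b=c$ first, then reduce the radical inequality to a statement about the powerful parts of three explicit integers linear in $x$. For the identity, $(x-1)^2(4x-1)=4x^3-9x^2+6x-1$ and $(3x-1)^2=9x^2-6x+1$, so $a+b=4x^3=2^{3\alpha+2}=c$. This is the integer specialization of the polynomial identity $(t-1)^2(4t-1)+(3t-1)^2=4t^3$. In that identity the right side is a pure power of $t$ and the two summands are essentially squares times a linear factor, so Mason's theorem (the case $m=3$ of the Shapiro--Sparer lemma) is nearly sharp for it. That is the motivation for the family $\mathcal F$.

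Next I check that $a,b,c$ are pairwise coprime. For $\alpha\ge1$, $a$ and $b$ are odd and $c$ is a power of $2$. A common odd prime of $a$ and $b$ would then divide $c$, which is impossible. Hence
\[
\textup{rad}(abc)=2\,\textup{rad}\bigl((2^{\alpha}-1)(2^{\alpha+2}-1)\bigr)\,\textup{rad}(3\cdot2^{\alpha}-1)
\ge \frac{2\,(2^{\alpha}-1)(2^{\alpha+2}-1)(3\cdot 2^{\alpha}-1)}{Q_1Q_2Q_3},
\]
where $Q_1,Q_2,Q_3$ are the squarefull parts of $2^{\alpha}-1$, $2^{\alpha+2}-1$ and $3\cdot2^{\alpha}-1$ (an integer $N$ satisfies $N\le \textup{rad}(N)\cdot Q(N)$). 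The numerator is $\gg x^3\asymp c$. So the inequality $c<K_\varepsilon(\textup{rad}(abc))^{1+\varepsilon}$ follows once I show $Q_i\ll_\varepsilon x^{\varepsilon}$ for each $i$, i.e.\ that the powerful part of each of these three numbers is $2^{o(\alpha)}$.

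This last step is the main obstacle, and elementary arguments will not give it. For $2^{\alpha}-1$ and $2^{\alpha+2}-1$ it is a statement about the squarefull part of Mersenne-type numbers, which is Wieferich-flavoured. I would attack it with the Schmidt Subspace Theorem, in the form used by Corvaja and Zannier for values of linear recurrences and $S$-unit sums. Suppose $2^{\alpha}-1=u v^2$ (or $3\cdot 2^{\alpha}-1=uv^2$) with $v\ge 2^{\varepsilon\alpha}$. Expanding $v=\sqrt{(2^{\alpha}-1)/u}$ as a binomial series in $2^{-\alpha}$ gives approximations that are too good for the subspace theorem, with $S=\{2,\infty\}$ plus the primes of $u$. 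The aim is a contradiction for all but finitely many $\alpha$, or at least for $\alpha$ outside a set that can be absorbed into $K_\varepsilon$. I expect this to work cleanly only when $u$ is controlled. If the subspace argument does not reach the needed uniformity, the fallback is to state the result conditionally on the $abc$-type bound for these specific sequences. The remaining steps — the identity, coprimality and the reduction to $Q_i$ — are routine.
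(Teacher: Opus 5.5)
Your identity $a+b=c$, the coprimality argument and the reduction to powerful parts are fine, up to a harmless constant. Since $\gcd(2^\alpha-1,2^{\alpha+2}-1)$ divides $3$, your displayed lower bound for $\textup{rad}(abc)$ is off by a factor $3$ when $\alpha$ is even. Note that the paper gives no proof of this proposition at all; it is offered only as an illustrative example.

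The genuine gap is the step you flag, and your Subspace Theorem sketch does not close it. Write $2^\alpha-1=uv^2$ with $v\ge 2^{\varepsilon\alpha}$. The cofactor $u$ can then be as large as $2^{(1-2\varepsilon)\alpha}$, and its prime divisors change with $\alpha$. So $u$ is not an $S$-unit for any fixed finite $S$. Nor is it small enough for the binomial expansion of $\sqrt{(2^\alpha-1)/u}$ to beat the trivial approximation. Corvaja--Zannier type arguments need $u$ fixed, or confined to a finitely generated group. That is exactly your ``controlled'' case, and the uncontrolled case is the whole content of the claim.

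In fact no currently known method can close this step, because your reduction is reversible: the proposition itself implies an open problem.
\begin{enumerate}
\item We have $c=4\cdot 8^{\alpha}$ and $\textup{rad}(abc)\le 2\,(2^{\alpha+2}-1)(3\cdot 2^\alpha-1)\,\textup{rad}(2^\alpha-1)<24\cdot 4^{\alpha}\,\textup{rad}(2^\alpha-1)$.
\item The proposition with $\varepsilon=1/10$ therefore gives $\textup{rad}(2^\alpha-1)\gg 2^{0.72\alpha}$.
\item A powerful $N$ has $\textup{rad}(N)\le N^{1/2}$, so $2^\alpha-1$ is not powerful for large $\alpha$.
\item Take $\alpha=q$ prime. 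Some prime $p$ divides $2^q-1$ exactly once, and $\mathrm{ord}_p(2)=q$.
\item This $p$ is not a Wieferich prime. Indeed, $2^{p-1}\equiv 1 \pmod{p^2}$ would force $\mathrm{ord}_{p^2}(2)=\mathrm{ord}_p(2)$, and then $p\mid 2^q-1$ would imply $p^2\mid 2^q-1$.
\item Distinct $q$ give distinct $p$.
\end{enumerate}
So the proposition implies that there are infinitely many non-Wieferich primes to base $2$. This is a well-known open problem, so far proved only conditionally, e.g.\ under the $abc$-conjecture (Silverman, 1988).

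The statement as posed is therefore currently out of reach by any method. Your fallback of proving it conditionally is the best available. But under $abc$ the proposition is immediate, being just $abc$ restricted to $\mathcal F$, so the fallback does not prove the statement as stated.
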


\bs The statements of this section serve as useful illustrative examples of the
$abc$-conjecture for university students.
It is hoped that propositions and exercises of this type
could effectively enhance students' problem-solving skills for future
research related to this subject.

\section{Almost disproving a conjecture}

I almost managed to disprove my Conjecture 1.1
as follows. If we replace $\mathbb Z$ in Conjecture 1.1 with another ring,
say $\mathbb Q(\sqrt[4]{2}+i)$ (which is not only a ring, but also a field),
and use $n=4$ in place of $n\ge 5$,
then the conjecture does not hold. More precisely:

\begin{exm}\lb{thm4.1}
  If $\varepsilon=e^{i\pi/4}=\dfrac{\sqrt{2}}{2}+\dfrac{\sqrt{2}}{2}i$, then
  for the four polynomials
  \[
  p(x)=x^4+1, \ \ q(x)=\varepsilon 8^{1/4}x^3,\ \ r(x)=x^4-1,\ \ s(x)=8^{1/4}x,
  \]
  of degree $\le 4$, we have
  \[
  p^4+q^4=r^4+s^4\ne 0
  \]
  and $p,q,r,s\in\mathbb Q\zj{\sqrt[4]{2}+i}$.
\end{exm}

Studying this example helped to better understand the scope and limitations
of Conjecture \ref{conj1.1}.

The novelty of Example \ref{thm4.1} cannot be underestimated by the fact
that Euler already proved a similar (but not the same) result.
This is because Euler
used polynomials of degrees $6$ and $7$. Namely, for the polynomials
\begin{align*}
p(x)&=x^7+x^5-2 x^3+3 x^2+x\\
q(x)&=x^6-3 x^5-2 x^4+x^2+1\\
r(x)&=x^7+x^5-2 x^3-3 x^2+x\\
s(x)&=x^6+3 x^5-2 x^4+x^2+1
\end{align*}
the equation
\[
p^4+q^4=r^4+s^4
\]
holds.

\bigskip

Finally, to show that the polynomials $p,q,r$ and $s$
from Example \ref{thm4.1} are indeed in $\mathbb Q\zj{\sqrt[4]{2}+i}$,
we need to prove that
$8^{1/4}$ and $\varepsilon$, are elements of $\mathbb Q\zj{\sqrt[4]{2}+i}$.
To confirm this, we will follow the method used in \cite{bovites}.
Let $\alpha=\sqrt[4]{2}+i$. Then,
\begin{align*}
  (\alpha-i)^4&=2
\end{align*}
and
\begin{align*}
  \alpha^4-4\alpha^3i-6\alpha^2+4\alpha i+1 &=2.
\end{align*}
So,
\begin{align*}
  i&=\dfrac{\alpha^4-6\alpha^2-1}{4\alpha^3-4\alpha}.
\end{align*}
By this, $i\in\mathbb Q(\alpha)=\mathbb Q\zj{\sqrt[4]{2}+i}$.
But then $\sqrt[4]{2}=\alpha-i\in\mathbb Q(\alpha)$.
That is, if we consider the cube of $\sqrt[4]{2}$,
we get $8^{1/4}\in\mathbb Q(\alpha)$. Furthermore, if we consider the
square of $\sqrt[4]{2}$ and divide it by 2, we get
$\dfrac{\sqrt{2}}{2}\in\mathbb Q(\alpha)$. This implies
that $\varepsilon=\dfrac{\sqrt{2}}{2}+\dfrac{\sqrt{2}}{2}i\in\mathbb Q(\alpha) =\mathbb Q\zj{\sqrt[4]{2}+i}$. 

\section{Proofs}

\bi\textbf{Proof of Theorem  \ref{thm2.1}.}
The main tool of the proof is the Mason-Stothers theorem, initially called Mason's theorem, which was first proved by Stothers \cite{Sto} and a little later by Mason \cite{Masonfunction}. Snyder gave a very elegant proof in \cite{Syd}.

\begin{lem}[Mason]\lb{lem5.2}
  Let $F$ be an algebraically closed field.
  Suppose that
  $f,g,h\in F[x]$ are polynomials with $\gc (f,g,h)=1$, $f+g=h$
  and that there is a
  nonzero between their derivatives. Then,
  \[
  \max\{\deg f,\deg g,\deg h\}\le
  r^*(fgh)-1.
  \] 
\end{lem}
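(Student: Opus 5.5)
We prove Lemma \ref{lem5.2} by the Wronskian argument of Snyder \cite{Syd}. The Wronskian $W=fg'-f'g$ is the tool: it is nonzero, it is divisible by a large part of $fgh$ coming from the repeated roots, and its degree is at most $\deg f+\deg g-1$. Comparing these two facts gives the bound.

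\emph{Reduction.} Since $f+g=h$ and $\gc(f,g,h)=1$, any common factor of two of the polynomials divides the third. Hence $f,g,h$ are pairwise coprime, and $\textup{rad}(fgh)=\textup{rad} f\cdot\textup{rad} g\cdot\textup{rad} h$, so $r^*(fgh)=r^*(f)+r^*(g)+r^*(h)$. The nonvanishing hypothesis on the derivatives we read as saying that $f'$ and $g'$ are not both zero; otherwise $f,g,h$ would all be constant in characteristic $0$, or all $p$-th powers in characteristic $p$. Since the degrees are symmetric after rewriting $g=h-f$ and so on, we may assume $\deg h=\max\{\deg f,\deg g,\deg h\}$.

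\emph{The Wronskian.} Set $W=fg'-f'g$. Differentiating $f+g=h$ gives $W=fh'-f'h=hg'-h'g$ as well. First we show $W\ne 0$. If $W=0$, then $f\mid f'g$, and coprimality gives $f\mid f'$. By degree this forces $f'=0$, and likewise $g'=0$, contradicting the hypothesis. Next, $\deg W\le \deg f+\deg g-1$.

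\emph{Divisibility.} Let $a$ be a root of $f$ of multiplicity $e$. Then $(x-a)^{e-1}$ divides both $f$ and $f'$, so it divides $W=fg'-f'g$. Thus $f/\textup{rad} f$ divides $W$. Using the other two expressions for $W$, the same holds for $g/\textup{rad} g$ and $h/\textup{rad} h$. These three quotients are pairwise coprime, so their product divides $W$. Since $W\ne0$, we get
\[
\deg f+\deg g+\deg h-r^*(fgh)\le \deg W\le \deg f+\deg g-1,
\]
that is, $\deg h\le r^*(fgh)-1$.

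The only delicate step is proving $W\ne0$. In characteristic $0$ this is automatic as soon as not all three polynomials are constant. In positive characteristic it needs exactly the hypothesis on the derivatives, together with the observation that $f\mid f'$ forces $f'=0$.
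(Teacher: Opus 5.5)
Your proof is correct, and it is the argument the paper relies on: the paper gives no proof of this lemma itself but refers to Stothers, to Mason, and to Snyder's Wronskian proof \cite{Syd}, which is what you reproduce. One small remark: you do not need to assume $\deg h$ is maximal, because bounding $\deg W$ through the other two expressions $W=fh'-f'h=hg'-h'g$ gives $\deg g\le r^*(fgh)-1$ and $\deg f\le r^*(fgh)-1$ directly.
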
  

In 1994, Wiles proved the famous Fermat conjecture for
$n\ge 3$.
Paradoxically, the fact that there is no solution to $a^n+b^n=c^n$
for coprime polynomials
different from a constant has been known since the XIX century, and
the first proof used deep algebraic tools. A more modern approach shows
that this result easily follows from Mason's theorem:

\begin{lem}\lb{lem5.2}
  There are no polynomials $a,b,c\in\mathbb C[x]$ and an integer
  $n \ge 3$
  for which 
  $\gc (a,b,c)=1$ and
  \[
  a^n+b^n=c^n
  \]
  holds. 
\end{lem}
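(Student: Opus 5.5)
We apply Mason's theorem (Lemma \ref{lem5.2}, the first of the two lemmas with that label) to the triple $f=a^n$, $g=b^n$, $h=c^n$. The statement tacitly requires that not all of $a,b,c$ are constant, since constant solutions exist (e.g.\ $a=b=1$, $c=2^{1/n}$ over $\mathbb C$). We also assume $abc\neq 0$: if, say, $a=0$, then $b^n=c^n$, so $b$ and $c$ differ by a constant factor, and $\gcd(a,b,c)=1$ forces them to be constants. Let $d=\max\{\deg a,\deg b,\deg c\}\ge 1$.

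\textbf{Coprimality.} First we check that $\gcd(a,b,c)=1$ makes $a,b,c$ pairwise coprime. If an irreducible $\pi$ divides both $a$ and $b$, then $\pi\mid c^n$, so $\pi\mid c$, contradicting $\gcd(a,b,c)=1$. The same argument handles the other pairs. Hence $\gcd(a^n,b^n,c^n)=1$, and the gcd hypothesis of Mason's theorem holds for $a^n+b^n=c^n$.

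\textbf{Derivative hypothesis.} This is the only delicate step. We interpret the condition in Lemma \ref{lem5.2} as requiring that not all of $f',g',h'$ vanish. Over $\mathbb C$ a polynomial has zero derivative exactly when it is constant. So we need that not all of $a^n,b^n,c^n$ are constant, which holds because some one of $a,b,c$ is nonconstant.

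\textbf{Degree comparison.} Mason's theorem gives
\[
nd=\max\{\deg a^n,\deg b^n,\deg c^n\}\le r^*(a^nb^nc^n)-1 .
\]
The radical of $a^nb^nc^n$ equals the radical of $abc$, so
\[
r^*(a^nb^nc^n)\le \deg a+\deg b+\deg c\le 3d .
\]
Combining, $nd\le 3d-1<3d$, hence $n<3$. This contradicts $n\ge 3$, so no such polynomials exist.
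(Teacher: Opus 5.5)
Your proof is correct and follows the route the paper indicates. The paper gives no proof of its own; it only says the lemma follows easily from Mason's theorem and cites Lang. Your argument is that standard derivation: apply Mason to $a^n+b^n=c^n$ and compare $nd$ with $r^*(abc)\le 3d$. Your remark that the statement tacitly needs at least one of $a,b,c$ to be nonconstant is also right, since $1^n+1^n=(2^{1/n})^n$ otherwise gives a counterexample; that hypothesis does hold where the paper uses the lemma in the proof of Theorem \ref{thm2.1}.
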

(For a simplified proof using Mason's theorem, see, e.g., \cite{Lange}.)

\bigskip

Let us return to the proof of Theorem \ref{thm2.1}.
Without loss of generality, we may assume that among the
polynomials $p,q,r,s$, the one with the highest degree is $s$. Let 
\[
k\stackrel{\textup{def}}{=}\deg s=\max\{\deg p,\deg q,\deg r,\deg s\}.
\]
If one of the polynomials $p,q,r$ is a constant multiple of the
polynomial $s$, then Theorem \ref{thm2.1} follows from Lemma \ref{lem5.2}.
For example, let $p=cs$, where $c$ is a constant. If $c^n=1$, then $p^n=s^n$,
and thus, $q^n=r^n$ also holds, which contradicts the conditions of the
theorem. If $c^n\ne 1$, then the equation
\[
p^n+q^n=r^n+s^n
\]
can be rearranged as
\[
(c^n-1)s^n+q^n=r^n.
\]
Now $(c^n-1)^{1/n}$ is a complex number,
so $s_0\stackrel{\textrm{def}}{=}(c^n-1)^{1/n }s\in\mathbb C[x]$
also holds, and thus
\[
s_0^n+q^n=r^n.
\]
However, this contradicts Lemma \ref{lem5.2}.
The cases $q=cs$ and $r=cs$, where $c$ is a constant, can be handled
similarly.

\bi We may assume that each of $p,q,r$ is not a constant multiple of $s$.
That is, none of the rational functions $\dfrac{p}{s}$, $\dfrac{r}{s}$,
$\dfrac{q}{s}$ is constant. Thus, the derivatives of these
rational functions are not zero. Then, by \eqref{foegy}
\[
\zj{\dfrac{p}{s}}^n+\zj{\dfrac{q}{s}}^n
= \zj{\dfrac{r}{s}}^n+1. 
\]
Differentiating the function equation yields:
\[
n\zj{\dfrac{p}{s}}^{n-1}
\dfrac{p's-ps'}{s^2}
+n\zj{\dfrac{q}{s}}^{n-1}
\dfrac{q's-pq'}{s^2}=
n\zj{\dfrac{r}{s}}^{n-1}
\dfrac{r's-rs'}{s^2}.
\]
Multiplying the equation by $\dfrac{1}{n}s^{n+1}$, we get:
\begin{align}
p^{n-1}(p's-ps')
+q^{n-1}(q's-qs')
=r^{n-1}(r's-rs').\lb{r2} 
\end{align}
The polynomials $p's-ps'$, $q's-qs'$, $r's-rs'$ are
the so-called Wronskians. (Interested readers can read more about
Wronskians on the related Wikipedia page \cite{Wrons}.)
We want to use Mason's theorem in equation \eqref{r2}.
At first glance, $f=p^{n-1}(p's-ps')$, $g=q^{n-1}(q's-qs')$,
$h=r^{n-1}(r's-pr')$ seem to be a good choice, but Lemma \ref{lem5.2}
requires that the condition $\gc{f,g,h}$ must be satisfied.
Thus, we need to slightly modify the definitions of $f,g,h$. Let
\[
d\stackrel{\textup{def}}{=}
\gc \{p^{n-1}(p's-ps'),q^{n-1}(q's-qs'),
r^{n-1}(p's-rs')\}
\]
Then, by \eqref{r2},
\begin{align*}
\dfrac{p^{n-1}(p's-ps')}{d}
+\dfrac{q^{n-1}(q's-qs')}{d}
=\dfrac{r^{n-1}(r's-rs')}{d}. 
\end{align*}
Next, we want to use Lemma \ref{lem5.2} with
\begin{align}
f&=\dfrac{p^{n-1}(p's-ps')}{d},\notag\\
g&=\dfrac{q^{n-1}(q's-qs')}{d},\notag\\
h&=\dfrac{r^{n-1}(r's-rs')}{d}.\lb{r3a}
\end{align}
To do this, we first note that the
polynomials $p's-ps'$, $q's-qs'$, $r's-rs'$ are not identically zero, since,
for example,  if $p's-ps'=0$, then $\zj{\dfrac{p}{s}}'=0$,
and thus, $\dfrac{p}{s}$ is a constant. We excluded this case
at the beginning of the proof. Next, we will need the following:
\begin{lem}\lb{lem5.3}
If $k=\max\{\deg p,\deg q,\deg r,\deg s\}=\deg s$, 
then
\[
\deg d\le  6k-3
\]
\end{lem}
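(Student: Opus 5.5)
The plan is to bound $d$ root by root, showing that $d$ divides the product of the three Wronskians. Write
\[
W_p=p's-ps',\qquad W_q=q's-qs',\qquad W_r=r's-rs' .
\]
As noted just before the statement, none of these is identically zero. The claim I aim for is
\[
d \mid W_pW_qW_r ,
\]
after which $\deg d\le \deg W_p+\deg W_q+\deg W_r$, and it remains to bound each Wronskian's degree.

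\emph{Step 1: every root of $d$ misses one of $p,q,r$.} Let $a\in\mathbb C$ be a root of $d$. Suppose $p(a)=q(a)=r(a)=0$. Then \eqref{foegy} gives $s(a)^n=p(a)^n+q(a)^n-r(a)^n=0$, so $x-a$ divides all of $p,q,r,s$. This contradicts $\gc(p,q,r,s)=1$. Hence for each root $a$ of $d$ at least one of $p,q,r$, say $t$, satisfies $t(a)\ne 0$.

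\emph{Step 2: compare multiplicities.} Let $v_a$ denote the multiplicity of $x-a$, and take $t$ as in Step 1. Since $d\mid t^{n-1}W_t$ and $x-a\nmid t^{n-1}$, we get
\[
v_a(d)\le v_a(t^{n-1}W_t)=v_a(W_t)\le v_a(W_pW_qW_r).
\]
This holds for every root $a$ of $d$, so $d\mid W_pW_qW_r$.

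\emph{Step 3: degree of a Wronskian.} Take $t\in\{p,q,r\}$. If $\deg t<k$, each term of $t's-ts'$ has degree at most $\deg t+k-1\le 2k-2$. If $\deg t=k$, both $t's$ and $ts'$ have degree at most $2k-1$. Their coefficients of $x^{2k-1}$ are $k\,c_tc_s$ and $k\,c_tc_s$, where $c_t,c_s$ are the leading coefficients, so these cancel and again $\deg W_t\le 2k-2$. (If $k=0$ the statement is trivial.) Therefore
\[
\deg d\le 3(2k-2)=6k-6\le 6k-3.
\]

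The only step needing an idea is Step 1: it excludes a root common to $p,q,r$, which would otherwise let the huge factors $p^{n-1},q^{n-1},r^{n-1}$ contribute to $d$. The rest is bookkeeping. The argument even gives the slightly stronger bound $6k-6$.
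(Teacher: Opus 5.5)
Your proof is correct and follows the same route as the paper. The paper also derives $\gcd(p,q,r)=1$ from $\gcd(p,q,r,s)=1$ and the equation, uses this to show each prime-power factor of $d$ divides $(p's-ps')(q's-qs')(r's-rs')$, and bounds each Wronskian by degree $2k-1$. Your leading-coefficient cancellation, giving degree at most $2k-2$ per Wronskian and hence $\deg d\le 6k-6$, is a small valid sharpening that the paper does not use.
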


First we note that
\begin{align}
\gc\{p,q,r\}=1,\lb{r4}
\end{align}
since
$\gc\{p,q,r\}^n \mid p^n+q^n-r^n=s^n$, and thus
$\gc\{p,q,r\}\mid s$. So
$\gc\{p,q,r\}\mid \gc\{p,q,r,s\}=1$, from which
$\gc\{p,q,r\}=1$ follows.

\bi Let us write $d$ in the form
\[
d = u_1^{\alpha_1} \dots u_s^{\alpha_s},
\]
where $u_1,\dots,u_s \in \mathbb{C}[X]$ are pairwise coprime irreducible polynomials.
Then,
\[
u_i^{\alpha_i} | p^{n-1}(p's-ps'), q^{n-1}(q's-qs'), r^{n-1}(r's-rs').
\]
Since $(p,q,r)=1$, one of the following must hold: $(u_i^{\alpha_i}, p^{n-1})=1$ or $(u_i^{\alpha_i}, q^{n-1})=1$ or $(u_i^{\alpha_i}, r^{n-1})=1$. In all three cases, by \eqref{r2},
\[
u_i^{\alpha_i} | (p's-ps)(q's-qs')(r's-rs').
\]
Since this holds for each factor $u_i^{\alpha_i}$ of $d$ and the polynomials
$u_i$ are pairwise coprime, we get
\[
d | (p's-ps)(q's-qs')(r's-rs').
\]
Since $p's-ps, q's-qs$ and $r's-rs$ are nonzero polynomials, we have
\[
\deg d \leq \deg (p's-ps) + \deg (q's-qs) + \deg (r's-rs).
\]
By $\deg p, \deg q, \deg r \leq k$, we get
\[
\deg d \leq (2k-1) + (2k-1) + (2k-1) = 6k-3,
\]
and this completes the proof of the lemma.

\bi If $p^n+q^n=r^n+s^n$, where $s$ has the maximal degree,
and $\deg s=k$, then among
$p,q,$ and $r$, at least one polynomial has a degree of $k$
(otherwise, the degree of $p^n+q^n-r^n=s^n$ would be less than $kn$).
Using this,
\eqref{r3a}, Lemma \ref{lem5.3} 
and $\deg p's-ps',\deg q's-qs',\deg r's-rs'\ge 1$, we find that
at least one of the polynomials $f,g,h$ defined in \eqref{r3a}
has degree $\ge (n-1)k+1-(6k-3)=(n-7)k+4\ge 4$.
Consequently, $f'=g'=h'=0$ cannot hold. Moreover, we have also proved that
\[
\max\{\deg f,\deg g,\deg h\}\ge (n-7)k+4.
\]
\bi Using Lemma \ref{lem5.2}, we get
\[
\max \{\deg f,\deg g,\deg h\}\le r^*(fgh)-1.
\]
Thus,
\[
(n-7)k+4\le r^*(fgh)-1.
\]
But
\begin{align*}
  r^*(fgh)&=\deg\textup{rad }(fgh)\\
  &\le \deg \textup{rad } (p^{n-1}(p's-ps')q^{n-1}(q's-qs')r^{n-1}(r's-rs'))\\
  &=  \deg \textup{rad } (pqr(p's-ps')(q's-qs')(r's-rs'))\\
  &\le k+k+k+(2k-1)+(2k-1)+(2k-1)=9k-3.
\end{align*}
So,
\begin{align*}
  (n-7)k+4&\le 9k-4\\
  (n-7)k&<9k\\
  n&<16, 
\end{align*}
which contradicts the conditions of the theorem. 
This completes the proof of Theorem \ref{thm2.1} 

\bigskip\noindent\textbf{Proof of Theorem \ref{thm2.3}.}
We will use the following notation.
\begin{dfn}\lb{dfn5.6}
Let
\begin{align*}
  a_i &\stackrel{\textup{def}}{=} p_i^{k_i},\\
  K  &\stackrel{\textup{def}}{=}\min \{k_1,\dots,k_m\},\\
  T  &\stackrel{\textup{def}}{=}
       \max\{\textup{deg }p_1,\dots,\textup{deg }p_m\},\\
  \intertext{and}
  t_i&\stackrel{\textup{def}}{=}\textup{deg }p_i.
\end{align*}
Without loss of generality, we can assume that
\[
T=\textup{deg }p_1=t_1.
\]
Finally, for $1\le i\le m-3$ and $0\le n\le k_i$, we define
$b_{i,n}\in\mathbb C[x]$ by
\begin{align}
  a_i^{(n)}=p_i^{k_i-m+3}b_{i,n},\lb{bin}
\end{align}
where $a_i^{(n)}$ is the $n$-th derivative of $a_i=p_i^{k_i}$. (In this
definition, the exponent of $p_i$ is $k_i-m+3$, and not $k_i-n+3$.)
\end{dfn}  

\bigskip
We will proceed with a proof by contradiction. Suppose that
\begin{align}
K = \min \{k_1,\dots,k_m\} \ge \dfrac{1}{3}\zj{m^3+7m^2-49m+68}.\lb{KK1}
\end{align}  

Let us now return to the proof of the theorem. Then,
\[
p_1^{k_1} + \dots + p_m^{k_m} = 0,
\]
which can be written as
\[
a_1 + \dots + a_m = 0.
\]
Taking the $n$-th derivative of this equation:
\[
a_1^{(n)} + \dots + a_m^{(n)} = 0.
\]
Consider the following Wronskian determinant:
\begin{align}
W \stackrel{\text{def}}{=} \begin{vmatrix}
a_1+\dots+a_{m}
& a_1' +\dots +a_{m}'&
 \dots & a_1^{(m-3)}+\dots +  a_{m}^{(m-3)}  \\
a_4 & a_4' & \dots & a_4^{(m-3)} \\
a_5 & a_5' & \dots & a_5^{(m-3)} \\
\vdots & \vdots & \ddots & \vdots \\
a_{m} & a_{m}' & \dots & a_m^{(m-3)} 
\end{vmatrix}=0.\lb{wr1}
\end{align}

\bigskip
By subtracting the sum of the other rows from the first row of $W$, we get:
\begin{align}
W&= \begin{vmatrix}
a_1+a_2+a_3 & a_1'+a_2'+a_3' & \dots & a_1^{(m-3)}+a_2^{(m-3)}+a_3^{(m-3)} \\
a_4 & a_4' & \dots & a_4^{(m-3)} \\
a_5 & a_5' & \dots & a_5^{(m-3)} \\
\vdots & \vdots & \ddots & \vdots \\
a_m & a_m' & \dots & a_m^{(m-3)}
\end{vmatrix}\notag\\
&= \footnotesize\begin{vmatrix}
a_1 & a_1' & \dots & a_1^{(m-3)} \\
a_4 & a_4' & \dots & a_4^{(m-3)} \\
a_5 & a_5' & \dots & a_5^{(m-3)} \\
\vdots & \vdots & \ddots & \vdots \\
a_m & a_m' & \dots & a_m^{(m-3)}
\end{vmatrix} + \begin{vmatrix}
a_2 & a_2' & \dots & a_2^{(m-3)} \\
a_4 & a_4' & \dots & a_4^{(m-3)} \\
a_5 & a_5' & \dots & a_5^{(m-3)} \\
\vdots & \vdots & \ddots & \vdots \\
a_m & a_m' & \dots & a_m^{(m-3)}
\end{vmatrix} + \begin{vmatrix}
a_3 & a_3' & \dots & a_3^{(m-3)} \\
a_4 & a_4' & \dots & a_4^{(m-3)} \\
a_5 & a_5' & \dots & a_5^{(m-3)} \\
\vdots & \vdots & \ddots & \vdots \\
a_m & a_m' & \dots & a_m^{(m-3)}
                \end{vmatrix}.\lb{wr2}  
\end{align}                     
It is important to note that none of the three determinants on the right side are $0$. The $i$-th determinant $(1\le i\le 3)$ is the Wronskian of the polynomials $a_i, a_4, \dots, a_m$. 
We know that the Wronskian of analytic functions is $0$ if and only if they are linearly dependent (see e.g., \cite[pp. 91-92]{Boc1}, \cite[Theorem II]{Boc2}, \cite[Chap. 3, \S 7]{Hur} and
\cite[Theorem 3]{Kru}). By the conditions of the theorem, the
polynomials $a_i=p_i^{k_i}, a_2=p_2^{k_2}, \dots, a_m=p_m^{k_m}$ are linearly independent for any $1\le i\le 3$. Consequently, none of the three determinants
above are $0$ (since polynomials are analytic functions).

\bigskip By Definition \ref{dfn5.6}
\begin{align}
  \textup{deg }b_{i,n} &= \textup{deg }a_i^{(n)}
                       -\textup{deg }p_i^{k_i-m+3}\notag\\
                       &=(k_it_i-n)-(k_i-m+3)t_i\notag\\
                       &=(m-3)t_i-n.\lb{sr0} 
\end{align}
Using again Definition \ref{dfn5.6}, \eqref{wr1} and \eqref{wr2}
we get
\begin{align}
  W&=p_1^{k_1-m+3}p_4^{k_4-m+3}\dots p_m^{k_m-m+3}
    \begin{vmatrix}
b_{1,0} & b_{1,1} & \dots & b_{1,m-3} \\
b_{4,0} & b_{4,1} & \dots & b_{4,m-3} \\
\vdots & \vdots & \ddots & \vdots \\
b_{m,0} & b_{m,1} & \dots & b_{m,m-3}
                 \end{vmatrix}\notag\\
&+ p_2^{k_2-m+3}p_4^{k_4-m+3}\dots p_m^{k_m-m+3}
   \begin{vmatrix}
b_{2,0} & b_{2,1} & \dots & b_{2,m-3} \\
b_{4,0} & b_{4,1} & \dots & b_{4,m-3} \\
\vdots & \vdots & \ddots & \vdots \\
b_{m,0} & b_{m,1} & \dots & b_{m,m-3}
    \end{vmatrix}\notag\\
& + p_3^{k_3-m+3}p_4^{k_4-m+3}\dots p_m^{k_m-m+3}
   \begin{vmatrix}
b_{3,0} & b_{3,1} & \dots & b_{3,m-3} \\
b_{4,0} & b_{4,1} & \dots & b_{4,m-3} \\
\vdots & \vdots & \ddots & \vdots \\
b_{m,0} & b_{m,1} & \dots & b_{m,m-3}
    \end{vmatrix}=0.\lb{zhzh}                     
\end{align}
Let
\begin{align}
  \widetilde{w}\stackrel{\text{def}}{=}
  \gcd \zj{p_1^{k_1-m+3},p_2^{k_2-m+3},p_3^{k_3-m+2}}.
\end{align}
The degree of the polynomial $\widetilde{w}$ plays a prominent role in the proof.
\begin{lem}\lb{lemw} If the conditions of the theorem hold, then
 \[
   \deg \widetilde{w}\le \dfrac{1}{3}\zj{m^3-11m^2+38m-40}T.
 \]
\end{lem}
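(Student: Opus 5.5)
The plan is to bound $\deg\widetilde w$ with a second Wronskian, built only from $a_4,\dots,a_m$, in which $\widetilde w$ appears as a divisor alongside large powers of $p_4,\dots,p_m$. The comparison with $\deg W$ is done one irreducible factor at a time. Let $S=a_1+a_2+a_3=-(a_4+\dots+a_m)$ and set $V\stackrel{\textup{def}}{=}W(a_4,\dots,a_m)$, the Wronskian of these $m-3$ polynomials, using derivatives of order $0,\dots,m-4$. Since any $m-1$ of the $a_i$ are linearly independent, so are $a_4,\dots,a_m$, hence $V\neq 0$ by the Wronskian criterion already quoted in the proof.

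\textbf{Step 1 (each row carries a large prime power).} For $0\le n\le m-4$, the $n$-th derivative of $p_j^{k_j}$ is divisible by $p_j^{k_j-m+4}$. Moreover every entry $S^{(n)}=a_1^{(n)}+a_2^{(n)}+a_3^{(n)}$ is divisible by $\widetilde w$, because $\widetilde w$ divides $p_1^{k_1-m+3}$, $p_2^{k_2-m+3}$ and $p_3^{k_3-m+2}$.

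\textbf{Step 2 (localize at an irreducible factor).} Fix an irreducible $u$ with $u^e\,\Vert\,\widetilde w$ and $e\ge 1$. Then $u$ divides $p_1,p_2,p_3$. By \eqref{ll01} there is some $j_0\ge 4$ with $u\nmid p_{j_0}$. Replace the row of $a_{j_0}$ in $V$ by the row of $S$. Since $S=-\sum_{j\ge4}a_j$, this gives $W(\dots,S,\dots)=-V$. Reading off the row divisibilities from Step 1,
\[
v_u(V)\ \ge\ e+\sum_{j\ge 4,\ j\ne j_0}(k_j-m+4)\,v_u(p_j)\ =\ e+\sum_{j\ge 4}(k_j-m+4)\,v_u(p_j),
\]
where the equality uses $v_u(p_{j_0})=0$. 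For irreducibles $u\nmid\widetilde w$, the unmodified $V$ already gives $v_u(V)\ge\sum_{j\ge4}(k_j-m+4)v_u(p_j)$. Hence
\[
\widetilde w\cdot\prod_{j\ge 4}p_j^{k_j-m+4}\ \Big|\ V.
\]

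\textbf{Step 3 (degree count).} Every term in the expansion of $V$ has degree at most $\sum_{j\ge4}k_jt_j$. Comparing degrees gives
\[
\deg\widetilde w\le \sum_{j\ge4}k_jt_j-\sum_{j\ge4}(k_j-m+4)t_j=(m-4)\sum_{j\ge 4}t_j\le (m-3)(m-4)T.
\]

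\textbf{Main obstacle.} The difficulty is matching the exact constant $\frac13(m^3-11m^2+38m-40)$. For $m=4$ both bounds are $0$, but for $m=5$ the target is $0$ while Step 3 gives $2T$. The cheap bound is therefore not enough for small $m$. The case $m=3$ also needs separate treatment, since then $V$ is an empty Wronskian; the target there is $-\frac{4}{3}T$, which must be read as forcing $\widetilde w$ to be constant.

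\textbf{Planned fix.} I would sharpen Step 2 for small $m$ by exploiting more of the structure. First, the exponent $k_3-m+2$ in the definition of $\widetilde w$ leaves one extra derivative of room in the $S$-row. Second, $\widetilde w$ also divides $S$ directly, so Mason's theorem (Lemma \ref{lem5.2}) could be applied to $a_1+a_2+a_3=-(a_4+\dots+a_m)$ after dividing out common factors. Third, I would iterate the replacement trick over several indices $j_0$, which gains the extra $u$-divisibility repeatedly. Summing such gains over $m-3$ levels is what I would expect to produce a bound cubic in $m$ of the stated shape.
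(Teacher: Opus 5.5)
Your Steps 1--3 are correct, but they do not prove the displayed inequality for $m=5,6,7$, and your ``planned fix'' is not an argument. However, that gap cannot be closed, because the displayed constant is itself wrong.

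Your route differs from the paper's. The paper runs a descending induction on $\widetilde P_t=\gcd(p_1^{k_1-m+t},\dots,p_t^{k_t-m+t})$, starting from $\widetilde P_m=\widetilde P_{m-1}=1$. At level $t$ it expands the Wronskian of $a_1+\dots+a_m,a_{t+1},\dots,a_m$ into $t$ nonzero Wronskians. It then splits $\widetilde P_{t-1}=Dr$ with $D\mid\widetilde P_t$ and $r$ dividing the remaining $(m-t+1)\times(m-t+1)$ determinant. This gives $\deg\widetilde P_{t-1}\le\deg\widetilde P_t+(m-t)(m-t+1)T$, iterated from $t=m-1$ down to $t=4$. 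You instead use a single Wronskian $V=W(a_4,\dots,a_m)$ and the identity $W(\dots,S,\dots)=-V$, one irreducible factor at a time, with $j_0$ supplied by \eqref{ll01}. This validly gives $\widetilde w\prod_{j\ge4}p_j^{k_j-m+4}\mid V$, hence $\deg\widetilde w\le(m-4)\sum_{j\ge4}t_j\le(m-3)(m-4)T$, without losses accumulating over $m-4$ levels. Your argument works whether the third exponent in $\widetilde w$ is read as $k_3-m+2$ (as printed) or $k_3-m+3$ (as the later step $\gcd(p_i^{k_i-m+3}/\widetilde w)=1$ needs).

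The paper's own recursion actually sums to $\sum_{i=1}^{m-4}i(i+1)T=\frac13(m-2)(m-3)(m-4)T$. The printed value $\frac13(m^3-11m^2+38m-40)=\frac13(m-2)(m-4)(m-5)$ comes from writing $m-5$ instead of $m-3$ in the closed forms of $\sum_{i\le m-4}i^2$ and $\sum_{i\le m-4}i$. So for $m=5$ the paper's argument gives exactly your $2T$.

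Worse, for $m=5$ the displayed bound $0$ is false under the hypotheses of Theorem \ref{thm2.3}. Take $p_1=-x^3$, $p_2=dx$, $p_3=cx$, $p_4=1+x^3$, $p_5=-1$, with $(k_1,\dots,k_5)=(3,3,6,3,3)$ and $d^3=c^6=-3$. The relation is just $(1+x^3)^3=1+3x^3+3x^6+x^9$. Four of the $a_i$ are $-x^9,-3x^3,-3x^6,-1$, and the fifth is minus their sum. Hence any four are linearly independent, and the gcd is $1$. Also $T=\deg p_1=3$, yet $\widetilde w=x$ under either reading of the exponent. So no argument from the stated hypotheses alone, without using \eqref{KK1} essentially, can deliver the stated constant.

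What is provable is your bound $(m-3)(m-4)T$. It equals the corrected paper bound at $m=5$ and beats it for $m\ge6$. Inserted into $K<6m^2-29m+36+\deg\widetilde w/T$, it gives $K<7m^2-36m+48$ in place of \eqref{ll02}.

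One small correction: for $m=3$ the target is $\frac13(27-99+114-40)T=\frac23T$, not $-\frac43T$. That case needs no Wronskian. Since $\widetilde w$ divides $a_1$ and $a_2$, it divides $a_3=-a_1-a_2$, so $\widetilde w=1$ by \eqref{ll01}; this is the paper's $\widetilde P_m=1$.
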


\bigskip We will prove Lemma \ref{lemw} at the end of the proof of the theorem.

Dividing \eqref{zhzh} by $\widetilde{w}p_4^{k_4-m+3}\dots p_m^{k_m-m+3}$ we get
\begin{align}
0&=\dfrac{p_1^{k_1-m+3}}{\widetilde{w}}
    \begin{vmatrix}
b_{1,0} & b_{1,1} & \dots & b_{1,m-3} \\
b_{4,0} & b_{4,1} & \dots & b_{4,m-3} \\
\vdots & \vdots & \ddots & \vdots \\
b_{m,0} & b_{m,1} & \dots & b_{m,m-3}
                 \end{vmatrix}
+ \dfrac{p_2^{k_2-m+3}}{\widetilde{w}}
   \begin{vmatrix}
b_{2,0} & b_{2,1} & \dots & b_{2,m-3} \\
b_{4,0} & b_{4,1} & \dots & b_{4,m-3} \\
\vdots & \vdots & \ddots & \vdots \\
b_{m,0} & b_{m,1} & \dots & b_{m,m-3}
    \end{vmatrix}\notag\\
& + \dfrac{p_3^{k_3-m+3}}{\widetilde{w}}
   \begin{vmatrix}
b_{3,0} & b_{3,1} & \dots & b_{3,m-3} \\
b_{4,0} & b_{4,1} & \dots & b_{4,m-3} \\
\vdots & \vdots & \ddots & \vdots \\
b_{m,0} & b_{m,1} & \dots & b_{m,m-3}
    \end{vmatrix}.\lb{wr3}                     
\end{align}  
Similar to \eqref{wr2},
none of the $3$ determinants on the right-handside are $0$.
Let
\begin{align*}
  f&=\dfrac{p_1^{k_1-m+3}}{\widetilde{w}}\begin{vmatrix}
b_{1,0} & b_{1,1} & \dots & b_{1,m-3} \\
b_{4,0} & b_{4,1} & \dots & b_{4,m-3} \\
\vdots & \vdots & \ddots & \vdots \\
b_{m,0} & b_{m,1} & \dots & b_{m,m-3}
                 \end{vmatrix}, \ \
  g=\dfrac{p_2^{k_2-m+3}}{\widetilde{w}}\begin{vmatrix}
b_{2,0} & b_{2,1} & \dots & b_{2,m-3} \\
b_{4,0} & b_{4,1} & \dots & b_{4,m-3} \\
\vdots & \vdots & \ddots & \vdots \\
b_{m,0} & b_{m,1} & \dots & b_{m,m-3}
    \end{vmatrix},\\
  h&=\dfrac{p_3^{k_3-m+3}}{\widetilde{w}}\begin{vmatrix}
b_{3,0} & b_{3,1} & \dots & b_{3,m-3} \\
b_{4,0} & b_{4,1} & \dots & b_{4,m-3} \\
\vdots & \vdots & \ddots & \vdots \\
b_{m,0} & b_{m,1} & \dots & b_{m,m-3}
    \end{vmatrix}.
\end{align*}  
Consequently, $f$, $g$ and $h$ are nonzero polynomials.
Let $d = \gcd(f,g,h)$. By
$\gcd\zj{p_1^{k_1-m+3}/\widetilde{w},\ 
  p_2^{k_2-m+3}/\widetilde{w},\ p_3^{k_3-m+3}/\widetilde{w}} = 1$
and \eqref{wr3}, we get that $d$ is a divisor of
the polynomial
\begin{align} H\stackrel{\textup{def}}{=}
  \begin{vmatrix}
b_{1,0} & b_{1,1} & \dots & b_{1,m-3} \\
b_{4,0} & b_{4,1} & \dots & b_{4,m-3} \\
\vdots & \vdots & \ddots & \vdots \\
b_{m,0} & b_{m,1} & \dots & b_{m,m-3}
  \end{vmatrix}\cdot \begin{vmatrix}
b_{2,0} & b_{2,1} & \dots & b_{2,m-3} \\
b_{4,0} & b_{4,1} & \dots & b_{4,m-3} \\
\vdots & \vdots & \ddots & \vdots \\
b_{m,0} & b_{m,1} & \dots & b_{m,m-3}
                     \end{vmatrix} \cdot
 \begin{vmatrix}
b_{3,0} & b_{3,1} & \dots & b_{3,m-3} \\
b_{4,0} & b_{4,1} & \dots & b_{4,m-3} \\
\vdots & \vdots & \ddots & \vdots \\
b_{m,0} & b_{m,1} & \dots & b_{m,m-3}
    \end{vmatrix}.\lb{wr4}                    
\end{align}
By \eqref{sr0}
\begin{align}
  \deg H&\le 3 \sum_{n=0}^{m-3} \max_i \{ \deg b_{i,n} \}\notag\\
  &= 3 \sum_{n=0}^{m-3} \max_i \{ (m-3) t_i -n\}\notag\\
  &< 3T \sum_{n=0}^{m-3} (m-3)\notag\\
  &= 3T(m-3)(m-2).\lb{HH} 
\end{align}
Let
\begin{align*}
f_1 = \frac{f}{d},\ \ g_1 = \frac{g}{d}, \ \ h_1 = \frac{h}{d}.
\end{align*}
Then, 
\[
  f_1 + g_1 + h_1 = 0.
\]
By \eqref{KK1}, \eqref{HH} and Lemma \ref{lemw} for the degree of the polynomial $f_1$ we have
\begin{align}
\text{deg } f_1 &= \text{deg } f - \text{deg } d\notag\\
&\ge \text{deg } f - \text{deg } H\notag\\
&\ge \text{deg } (p_1^{k_1-m+3})-\deg \widetilde{w}- \text{deg } H\notag\\
&\ge (K-m+3)T-\deg \widetilde{w} - 3(m-3)(m-2)T\notag\\
&\ge \zj{K-3m^2+14m-15-\deg \widetilde{w}/T}T\ge 1.\lb{mxdg}
\end{align}
Thus, $f_1'=g_1'=h_1'=0$ does not hold, hence we can apply Mason's theorem.
By Lemma \ref{lem5.2}, it follows that
\begin{align}
\max\{\deg f_1, \deg g_1, \deg h_1\} \le r^*(f_1g_1h_1) - 1\lb{Mas}
\end{align}
By \eqref{mxdg}
\[
  \max\{\deg f_1, \deg g_1, \deg h_1\}
  \ge \zj{K-3m^2+14m-15-\deg \widetilde{w}/T}T.
\]
On the other hand
\begin{align*}
  r^*(f_1g_1h_1)-1 &\le \text{deg }\zj{p_1p_2p_3H}-1\\
                   &< 3T+\text{deg }H\\
                   &< 3T+3T(m-3)(m-2)\\
                   &=3T\zj{m^2-5m+7}.  
\end{align*}
So, by \eqref{Mas}, we get
\begin{align*}
  \zj{K-3m^2+14m-15-\deg \widetilde{w}/T}T
  &< 3T\zj{m^2-5m+7}\\
  K &< 6m^2-29m+36+\deg \widetilde{w}/T.\\   
\end{align*}
By using Lemma \ref{lemw}, we get
\begin{align*}
  K< \dfrac{1}{3}\zj{m^3+7m^2-49m+68}. 
\end{align*}
In order to complete the proof of the theorem, we only need
to prove Lemma \ref{lemw}.

\bigskip\noindent\textbf{Proof of Lemma \ref{lemw}.}
To prove the lemma, let's introduce the following notation:
\[
\widetilde{P}_t \stackrel{\text{def}}{=} \text{gcd} \left( p_1^{k_1-m+t}, p_2^{k_2-m+t}, \dots, p_t^{k_t-m+t} \right)
\]
According to the theorem's assumption, $\widetilde{P}_m = 1$. However, $\widetilde{P}_{m-1}$ is also 1, since
\begin{align*}
\widetilde{P}_{m-1} &\mid p_1^{k_1-1}, p_2^{k_2-1}, \dots, p_{m-1}^{k_{m-1}-1}\\
\widetilde{P}_{m-1} &\mid p_1^{k_1}, p_2^{k_2}, \dots, p_{m-1}^{k_{m-1}}\\
\widetilde{P}_{m-1} &\mid p_1^{k_1} + \dots + p_{m-1}^{k_{m-1}} = -p_m^{k_m}\\
\widetilde{P}_{m-1} &\mid \text{gcd} \left( p_1^{k_1}, \dots, p_m^{k_m} \right) = 1.
\end{align*}
Thus, $\text{deg } \widetilde{P}_m = \text{deg } \widetilde{P}_{m-1} = 0$.

\bigskip
Next, we would like to provide an upper bound for
$\text{deg } \widetilde{P}_{t-1}$ 
using $\text{deg } \widetilde{P}_t$.
For this, let us consider the following Wronskian determinant:
\begin{align*}
W \stackrel{\text{def}}{=} \begin{vmatrix}
a_1+\dots+a_{m}
& a_1' +\dots a_{m}'&
 \dots & a_1^{(m-t)}+\dots +  a_{m}^{(m-t)}  \\
a_{t+1} & a_{t+1}' & \dots & a_{m}^{(m-t)} \\
a_{t+2} & a_{t+2}' & \dots & a_{m}^{(m-t)} \\
\vdots & \vdots & \ddots & \vdots \\
a_{m} & a_{m}' & \dots & a_m^{(m-t)} 
\end{vmatrix}=0.
\end{align*}
\bigskip
By subtracting the sum of the other rows from the first row, we get:
\begin{align}
W&= \begin{vmatrix}
      a_1+\dots+a_t
      & a_1'+\dots+a_t' & \dots & a_1^{(m-t)}+\dots+a_3^{(m-t)} \\
a_{t+1} & a_{t+1}' & \dots & a_{t+1}^{(m-t)} \\
a_{t+2} & a_{t+2}' & \dots & a_{t+2}^{(m-t)} \\
\vdots & \vdots & \ddots & \vdots \\
a_m & a_m' & \dots & a_m^{(m-t)}
\end{vmatrix}\notag\\
&= \footnotesize\begin{vmatrix}
a_1 & a_1' & \dots & a_1^{(m-t)} \\
a_{t+1} & a_{t+1}' & \dots & a_{t+1}^{(m-t)} \\
a_{t+2} & a_{t+2}' & \dots & a_{t+2}^{(m-t)} \\
\vdots & \vdots & \ddots & \vdots \\
a_m & a_m' & \dots & a_m^{(m-t)}
\end{vmatrix} + \begin{vmatrix}
a_2 & a_2' & \dots & a_2^{(m-t)} \\
a_{t+1} & a_{t+1}' & \dots & a_{t+1}^{(m-t)} \\
a_{t+2} & a_{t+2}' & \dots & a_{t+2}^{(m-t)} \\
\vdots & \vdots & \ddots & \vdots \\
a_m & a_m' & \dots & a_m^{(m-t)}
                \end{vmatrix} + \dots \notag\\
&+\begin{vmatrix}
a_t & a_t' & \dots & a_t^{(m-t)} \\
a_{t+1} & a_{t+1}' & \dots & a_{t+1}^{(m-t)} \\
a_{t+2} & a_{t+2}' & \dots & a_{t+2}^{(m-t)} \\
\vdots & \vdots & \ddots & \vdots \\
a_m & a_m' & \dots & a_m^{(m-t)}
                \end{vmatrix}.\lb{hhh}  
\end{align}
None of these $t$ determinants are $0$, since the
polynomials $a_i, a_{t+1}, a_{t+2}, \dots, a_m$ are linearly independent
if $1 \le i \le t$.
For $1\le i\le m-t$ and $0\le n\le k_i$ define $c_{i,n}\in\mathbb C[x]$
by
\[
  a_i^{(n)}=p_i^{k_i-m+t}c_{i,n}.
\]
Here
\begin{align}
  \textup{deg }c_{i,n} &= \textup{deg }a_i^{(n)}-\textup{deg }p_i^{k_i-m+t}
  \notag\\
                       &=(k_it_i-n)-(k_i-m+t)t_i\notag\\
                       &=(m-t)t_i-n\lb{ujabb}. 
\end{align}
From \eqref{hhh}, it follows that:
\begin{align}
&p_1^{k_1-m+t}
    \begin{vmatrix}
c_{1,0} & c_{1,1} & \dots & c_{1,m-t} \\
c_{t+1,0} & c_{t+1,1} & \dots & c_{t+1,m-t} \\
\vdots & \vdots & \ddots & \vdots \\
c_{m,0} & c_{m,1} & \dots & c_{m,m-t}
                 \end{vmatrix}
+ p_2^{k_2-m+t}
   \begin{vmatrix}
c_{2,0} & c_{2,1} & \dots & c_{2,m-t} \\
c_{t+1,0} & c_{t+1,1} & \dots & c_{t+1,m-t} \\
\vdots & \vdots & \ddots & \vdots \\
c_{m,0} & c_{m,1} & \dots & c_{m,m-t}
   \end{vmatrix}+\dots \notag\\
& + p_t^{k_t-m+t}
   \begin{vmatrix}
c_{t,0} & c_{t,1} & \dots & c_{t,m-t} \\
c_{t+1,0} & c_{t+1,1} & \dots & c_{t+1,m-t} \\
\vdots & \vdots & \ddots & \vdots \\
c_{m,0} & c_{m,1} & \dots & c_{m,m-t}
    \end{vmatrix}=0.\lb{wru3}                     
\end{align}  
Then, by \eqref{wru3}, $\widetilde{P}_{t-1}=\gcd \left(p_1^{k_1-m+t-1},\dots,p_{t-1}^{k_{t-1}-m+t-1}\right)$ is a divisor of
\begin{align}
  p_t^{k_t-m+t}
\begin{vmatrix}
c_{t,0} & c_{t,1} & \dots & c_{t,m-t} \\
c_{t+1,0} & c_{t+1,1} & \dots & c_{t+1,m-t} \\
\vdots & \vdots & \ddots & \vdots \\
c_{m,0} & c_{m,1} & \dots & c_{m,m-t}\lb{mat}
\end{vmatrix}. 
\end{align}
Let
\begin{align*}
  D&=\gcd (\widetilde{P}_{t-1},p_t^{k_1-m+t})\\
  \intertext{and}
  \widetilde{P}_{t-1} &= Dr\\
  p_t^{k_1-m+t}&=Ds.
\end{align*}  
Then,  $\gcd (r,s)=1$. So, by $\widetilde{P}_{t-1}$ is a divisor
of the expression in \eqref{mat}, we get $Dr$ is a divisor of 
\[
Ds  \begin{vmatrix}
c_{t,0} & c_{t,1} & \dots & c_{t,m-t} \\
c_{t+1,0} & c_{t+1,1} & \dots & c_{t+1,m-t} \\
\vdots & \vdots & \ddots & \vdots \\
c_{m,0} & c_{m,1} & \dots & c_{m,m-t}
\end{vmatrix}.
\]
Thus, $r$ is a divisor of
\[
  \begin{vmatrix}
c_{t,0} & c_{t,1} & \dots & c_{t,m-t} \\
c_{t+1,0} & c_{t+1,1} & \dots & c_{t+1,m-t} \\
\vdots & \vdots & \ddots & \vdots \\
c_{m,0} & c_{m,1} & \dots & c_{m,m-t}
\end{vmatrix}.
\]
But $D\mid p_1^{k_1-m+t-1},\dots,p_{t-1}^{k_{t-1}-m+t-1}$ and
$D\mid p_t^{k_t-m+t}$. Thus  $D\mid p_1^{k_1-m+t},\dots,p_{t}^{k_t-m+t}$,
from which $D\mid \widetilde{P}_{t}$. Then,
\[
\widetilde{P}_{t-1}=Dr\textup{ \ divides \ } \widetilde{P_t}   \begin{vmatrix}
c_{t,0} & c_{t,1} & \dots & c_{t,m-t} \\
c_{t+1,0} & c_{t+1,1} & \dots & c_{t+1,m-t} \\
\vdots & \vdots & \ddots & \vdots \\
c_{m,0} & c_{m,1} & \dots & c_{m,m-t}
\end{vmatrix}.
\]
By this and \eqref{ujabb},
\begin{align*}
  \deg \widetilde{P}_{t-1} &\le \deg \widetilde{P_{t}}
  +\deg  \begin{vmatrix}
c_{t,0} & c_{t,1} & \dots & c_{t,m-t} \\
c_{t+1,0} & c_{t+1,1} & \dots & c_{t+1,m-t} \\
\vdots & \vdots & \ddots & \vdots \\
c_{m,0} & c_{m,1} & \dots & c_{m,m-t}
         \end{vmatrix}\\
        &\le  \deg \widetilde{P_{t}} +\sum_{n=0}^{m-t}
          \max_i \{\deg c_{i,n}\}\\
&\le  \deg \widetilde{P_{t}} +\sum_{n=0}^{m-t}
                                    \max_i \{(m-t)t_i-n\}\\
&\le  \deg \widetilde{P_{t}} +T\sum_{n=0}^{m-t}(m-t)\\
&\le  \deg \widetilde{P_{t}} +(m-t)(m-t+1)T.
\end{align*}
By iterating this, we get:
\begin{align*}
\deg  \widetilde{P}_{3} &\le \deg  \widetilde{P}_{4} + (m-4)(m-3)T\\
&\le \deg  \widetilde{P}_{5} + (m-5)(m-4)T + (m-4)(m-3)T\\
&\ \ \vdots\\
&\le \deg  \widetilde{P}_{m-1} + \sum_{i=0}^{m-4} i(i+1)T\\
&= \sum_{i=0}^{m-4} (i^{2} + i)T\\
&= \frac{(m-4)(m-5)(2m-7)}{6}T + \frac{(m-4)(m-5)}{2}T\\
&= \frac{1}{3}\zj{m^{3} - 11m^{2} + 38m - 40}T.
\end{align*}
This completes the proof of the lemma and the theorem.

\bigskip\noindent\textbf{Proof of Corollary \ref{cor2.4}.}
Consider a linear combination of the polynomials $p_1^k, \dots,
p_m^k$ that equals $0$ and is as short as possible. Without loss of generality, this linear combination can be assumed to be in the form
\begin{align}
  \lambda_1 p_1^k + \dots + \lambda_s p_s^k = 0 \lb{cor1}
\end{align}
where $\lambda_i \in \mathbb{C} \setminus \{0\}$ and $3 \le s \le m$. Since $s$ is the shortest possible length under the given condition, any $s-1$ polynomials from $p_1^k, \dots, p_s^k$ are linearly independent. Let
\[
  d = \text{gcd} \{p_1, p_2, \dots, p_s\}.
\]
Define the polynomials $p_1',\dots,p_s'$ by the formula
\[
  p'_i = \lambda_i^{1/k} \frac{p_i}{d}.
\]
From \eqref{cor1}, it follows that
\[
  (p'_1)^k + \dots + (p'_s)^k = 0,
\]
where the polynomials $p'_1, \dots, p'_s$ satisfy the conditions of
Theorem \ref{thm2.3}, i.e., any $s-1$ of them are linearly independent and
$\gcd(p'_1, \dots, p'_s) = 1$. Applying Theorem \ref{thm2.3}, we get
\[
  k < \dfrac{1}{3}\zj{s^3+7s^2-49s+68}.
\]
Since $s \le m$, it follows that
\[
  k < \dfrac{1}{3}\zj{m^3+7m^2-49m+68}.
\]

\section{Remarks}

M. de Bondt \cite{Bondt} generalized Mason's theorem as follows:

\begin{thm}[de Bondt]\lb{thm6.1}
Suppose that  $m\ge 3$, for the (possibly multivariate) polynomials
$f_1,f_2,\dots,f_m$ over $\mathbb C$ we have
\[
f_1+f_2+\dots+f_m=0,
\]
and in case of $1\le i_1<i_2<\dots <i_s\le m$ we also have 
\[
f_{i_1}+f_{i_2}+\dots+f_{i_s}=0\ \ \ \Rightarrow \ \ \
\gc (f_{i_1},f_{i_2},\dots,f_{i_s})=1.
\]
Then,
\begin{equation}
\max_{1\le j\le m} \deg f_j
\le (m-2)(r^* (f_1)+r^* (f_2)+\dots +r^* (f_m)-1).
\lb{fuggvi}
\end{equation}
\end{thm}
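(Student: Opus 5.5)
The plan is the Wronskian method of Brownawell--Masser and Voloch, done so that only radicals enter the bound. Two reductions come first.

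\emph{Reduction to one variable.} For multivariate $f_j$, restrict to a generic line $x=a+bt$. For generic $(a,b)$ this preserves every $\deg f_j$, does not increase any $r^*(f_j)$, and preserves all vanishing or non-vanishing of subsums. It should also keep $\gc(f_{i_1},\dots,f_{i_s})=1$ whenever this held before, by a Bertini/Hilbert-irreducibility type argument: coprime polynomials have a common zero set of codimension $\ge 2$, which a generic line avoids. Hence it suffices to treat $\mathbb C[x]$.

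\emph{Reduction to a linearly independent case.} Pass to the $\mathbb C$-span of $f_1,\dots,f_m$, with the aim that, after renumbering, $f_2,\dots,f_m$ (or a suitable subfamily) are linearly independent and the relation is minimal. If some proper subsum vanishes, the hypothesis makes it coprime, and I apply induction on $m$ to it and to the complementary subsum; since $m\mapsto m-2$ is monotone, the bound only improves. If no proper subsum vanishes but a nontrivial linear relation with other coefficients exists, I would take a basis $f_{j_1},\dots,f_{j_\ell}$ of the span and write each $f_i$ as a combination of it. Then I work with the Wronskian of the basis and with the Wronskians obtained by exchanging one basis element for $f_i$; the exponent $\ell-1\le m-2$ only helps.

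\emph{Core estimate.} Suppose $f_2,\dots,f_m$ are independent and $f_1=-(f_2+\dots+f_m)$. Put $W=W(f_2,\dots,f_m)$. Replacing one row shows $W(f_1,\dots,\widehat{f_j},\dots,f_m)=\pm W$ for each $j$. Write
\[
  f_j=\pm\frac{W}{\prod_{i\ne j}f_i}\Big/\frac{W}{\prod_{i}f_i},
\]
where $L=W/\prod_{i\ne j}f_i$ is a determinant of the logarithmic-derivative quantities $f_i^{(n)}/f_i$, with $n\le m-2$. Then count zeros and poles.
\begin{itemize}
\item Each $f_i^{(n)}/f_i$ has poles only at zeros of $f_i$, of order $\le n$. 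Hence $L$ has poles of total order at most $(m-2)\sum_i r^*(f_i)$, counted over the distinct roots.
\item At infinity, $L$ decays like $x^{-(0+1+\dots+(m-2))}$, which is what produces the $-1$ in the bound.
\item The condition $\gc=1$ guarantees that at every point at least one $f_i$ is nonzero. This is what lets one bound the vanishing order of $W$ at a common point by the exponent $\sum_i\max(0,\operatorname{ord}f_i-(m-2))$ rather than by the full multiplicities.
\end{itemize}
Combining these, $\deg f_j\le (m-2)\bigl(r^*(f_1)+\dots+r^*(f_m)-1\bigr)$.

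The step I expect to be hardest is the reduction to an independent minimal relation. The hypothesis controls gcds only for subsums with coefficient $1$, while linear dependencies among the $f_j$ may carry arbitrary coefficients. I would first try the basis-and-exchange version of the Wronskian argument, which never needs a gcd of a sub-relation. I would fall back on induction over vanishing subsums only where a genuine $\{0,1\}$-subsum vanishes.
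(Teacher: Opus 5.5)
The paper does not prove this statement; it quotes it from de Bondt's preprint. So your proposal has to be judged on its own. Several parts are sound:

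\begin{itemize}
\item the restriction to a generic line (a plain dimension count suffices);
\item the induction over vanishing $\{0,1\}$-subsums;
\item the core Wronskian estimate when $f_2,\dots,f_m$ are linearly independent.
\end{itemize}

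For the core estimate, choose at each point $P$ an index $j_P$ with $f_{j_P}(P)\neq0$ and write $W=\pm\prod_{i\ne j_P}f_i\cdot\det\bigl(f_i^{(n)}/f_i\bigr)$. This gives $\operatorname{ord}_P W\ge\sum_i\operatorname{ord}_P f_i-(m-2)\,\#\{i: f_i(P)=0\}$. Summing over $P$ and comparing with $\deg W\le\sum_{i\ne j_0}\deg f_i-\binom{m-1}{2}$ yields $\deg f_{j_0}\le(m-2)\sum_i r^*(f_i)-\binom{m-1}{2}$, which is at least as strong as the claim. You should also add the hypothesis that not all $f_j$ are constant. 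Indeed, $1+1+(-2)=0$ satisfies the stated hypotheses but violates the bound, so your subsum induction must not apply the bound to all-constant subsums.

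The genuine gap is the step you flag yourself. After removing vanishing subsums, you can still have $\dim\operatorname{span}(f_1,\dots,f_m)<m-1$. For $N\ge1$,
\[
x^N+x^{N+1}+x^N(x+2)+1-(2x^{N+1}+3x^N+1)=0
\]
has no vanishing proper subsum and total $\gcd$ equal to $1$, yet $2f_1+f_2-f_3=0$. Hence $W(f_2,\dots,f_5)\equiv0$ and the core estimate says nothing.

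Passing to a minimal relation fails because the hypothesis controls only $\{0,1\}$-relations. Here $\gcd(f_1,f_2,f_3)=x^N$, so the circuit estimate bounds only $\deg f_i-N$. Moreover, $N$ is not controlled by the radicals of $f_1,f_2,f_3$ (their total is $4$); it is controlled only through the remaining terms, and your plan has no mechanism for that transfer. The paper can divide out such a gcd in the proof of Corollary \ref{cor2.4} only because its conclusion concerns the exponent $k$, not degrees. Theorem \ref{thm2.3} avoids the issue by simply assuming that any $m-1$ terms are independent.

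Basis-and-exchange does not close the gap either. With a basis of size $\ell<m-1$, exchange only lets you choose, point by point, which $\ell$ of the $f_i$ the Wronskian sees. With $B$ running over bases chosen among the $f_i$, the zero count gives at best
\[
\deg W\ge\sum_P\max_B\sum_{i\in B}\operatorname{ord}_Pf_i-(\ell-1)\sum_ir^*(f_i),
\]
while the behaviour at infinity gives $\deg W\le\min_B\sum_{i\in B}\deg f_i-\binom{\ell}{2}$. Nothing forces the difference to contain a full extra $\deg f_{j_0}$. In the independent case that extra term comes precisely from all $m$ orders entering at every point. So ``$\ell-1\le m-2$ only helps'' is misleading: the smaller Wronskian also certifies fewer zeros.

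Geometrically, the $m$ hyperplanes in $\mathbb P^{\ell-1}$ may be only in $(m-2)$-subgeneral position. In that range even Nochka-type truncated second main theorems give no bound. Handling these extra linear relations without losing the factor $m-2$ is the real content of de Bondt's theorem beyond the classical Wronskian bound, and your proposal leaves it open.
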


\bs \textbf{From this general theorem, Theorem \ref{thm2.1} and \ref{thm2.3}
  follow.} I found the paper \cite{Bondt} exclusively on arXiv, and
it hasn’t been published yet and may be under review.
The proof presented here differs from de Bondt's proof and relies solely
on elementary arguments.

\bs In 1979, Newman and Slater wrote an excellent (and possibly the first)
paper on this topic. However, one of their
theorems (see \cite[p. 481]{NewmanSlater}) had missing conditions.
Specifically, 
they forgot to assume certain pairwise coprime
conditions for the polynomials involved, and the
condition $n\ge k^2-k$ is missing. In any case,
Newman and Slater's theorem remains a valuable
contribution to the field's development.

\bs Despite their theoretical importance, the theorems studied in this paper
are not completely useless, and I hope to write another paper about
their cryptographical application (e.g., the generation of a large family of binary sequences with small cross-correlation measures) in the future, but that's another story.

\bigskip

\textbf{Acknowledgement.}
I would like to thank Imre Ruzsa for his intriguing  lecture
on Mason's theorem. I am also grateful to the referee for their valuable
advice, which led to the establishment of Theorem \ref{thm2.3}.

\bigskip
\bigskip\noindent
\footnotesize{\textbf{Data Availability Statement:}
  All data generated or analysed during this study are included in the present
  paper.}

\bigskip\noindent\footnotesize{\textbf{Conflict of Interest Statement:}
The author declares that they have no conflict of interest.}

\noindent Eötvös Loránd University, Institute of Mathematics,\\
H-1117 Budapest, Pázmány Péter st. 1/C, Hungary\\
\noindent E-mail address: \texttt{katalin.gyarmati@gmail.com}


\normalsize

\begin{thebibliography}{99}

\bibitem{Bayat}M. Bayat and H. Teimoori, \textit{A generalization of Mason's theorem for four polynomials}, Elem. Math. 59(1) (2004), 23–28.  

\bibitem{Boc1}M. B\^ocher, \textit{The theory of linear dependence}, Ann. of Math. (2) 2 (1900/01), 81–96.

\bibitem{Boc2}M. B\^ocher, \textit{Certain cases in which the vanishing of the Wronskian is a suﬃcient condition for linear dependence}, Trans. Amer. Math. Soc. 2 (1901), 139–149.
 
\bibitem{Bondt}M. de Bondt, \textit{Another generalization of Mason's ABC-theorem}, arXiv:0707.0434, \url{https://arxiv.org/abs/0707.0434}.

\bibitem{Browkin}J. Browkin and J. Brzezi'nski, \textit{Some remarks on the abc-conjecture}. Math. Comp. 62 (206) (1994), 931–939.

\bibitem{Br}W. D. Brownawell and D. W. Masser, \textit{Vanishing sums in function fields}, Math. Proc. Cambridge Philos. Soc. 100 (1986), 427-434.

\bibitem{Elk}N. D. Elkies, \textit{Rational points near curves and small nonzero
    $|x^3-y^2|$ via lattice reduction},
  Algorithmic number theory (Leiden, 2000), 33–63. Lecture Notes in Comput. Sci. 1838, Springer-Verlag, Berlin, 2000.
  

\bibitem{GrTuck}A. Granville and T. J. Tucker, \textit{It’s as easy as abc}, Notices Am. Math. Soc. 49, No. 10, 1224-1231 (2002).

\bibitem{Hardy1}G. H. Hardy, \textit{Ramanujan}, Cambridge University Press, New York, 1940, p. 12.

\bibitem{Hardy2}G. H. Hardy, \textit{Srinivasa Ramanujan}, Proc. London Math. Soc., s2-19 (1) (1921), xl–lviii.

\bibitem{Hirs1} M. D. Hirschhorn, \textit{An amazing identity of Ramanujan}, Math. Mag. 68 (1995), 199–201.

\bibitem{Hirs2} M. D. Hirschhorn, \textit{A proof in the spirit of Zeilberger of an amazing identity of Ramanujan}, Math. Mag. 69 (1996), 267–269.

\bibitem{Hirs3} M.D. Hirschhorn, \textit{Ramanujan and Fermat's last Theorem}, Austral. Math. Soc. Gazette 31 (2004), 256-257.

\bibitem{Hu1}P.-C. Hu and C.-C. Yang, \textit{Notes on a generalized abc-conjecture over function fields}, Ann. Math. Blaise Pascal 8(1) (2001), 61-71.
 
\bibitem{Hu2}P.-C. Hu and C.-C. Yang, \textit{A generalized abc-conjecture over function fields}, J. Number Theory 94(2) (2002), 286-298.

\bibitem{Hur}W. Hurewicz, \textit{Lectures on Ordinary Diﬀerential Equations}, Technology Press of the Massachusetts Institute of Technology, Cambridge, MA, 1958.

\bibitem{Kru}M. Krusemeyer, \textit{The teaching of mathematics: Why does the Wronskian work?}, Am. Math. Mon. 95 (1) (1988), 46-49.

\bibitem{Lange} S. Lang, \textit{Die abc-Vermutung}, Elem. Math. 48(3), 89-99 (1993).
 
\bibitem{Lehmer}D. H. Lehmer, \textit{On the Diophantine equation x3+y3+z3=1}, Journal of the London Mathematical Society, Volume s1-31 (3) (1956), 275–280.

\bibitem{Masonfunction}R. C. Mason, \textit{Diophantic equations over function fields}, London Math. Soc. Lecture Note Series 96, Cambridge, 1984.
 
\bibitem{NewmanSlater}D. J. Newman and M. Slater, \textit{Waring's problem for the ring of polynomials}, J. Number Theory 11(4) (1979), 477–487.

\bibitem{Rama}S. Ramanujan, \textit{The lost notebook and other unpublished papers}, Springer-Verlag, Berlin; Narosa Publishing House, New Delhi, 1988, p. 341.

\bibitem{R}Ruzsa, I., \textit{Fermat, Beal, abc-sejt'es, Stothers-Mason-t'etel}, presentation in Hungarian, Alfr\'ed R\'enyi Institute of Mathematics, 2022.
 
\bibitem{ShapiroSparer}H. N. Shapiro and G.H. Sparer, \textit{Extensions of a theorem of Mason}, Comm. Pure. Appl. Math. XLVII (1994), 711-718.

\bibitem{Silverman}J.H. Silverman, \textit{The S-unit equation over function fields}, Math. Proc. Cambridge Philos. Soc. 95(2) (1984), 3–4.

\bibitem{bovites}StackExchange,
  \textit{Simple extension of $\mathbb Q(\sqrt[4]{2},i)$}.
  \url{https://math.stackexchange.com/questions/2221706/simple-extension-of-mathbbq-sqrt42-i}.
 
\bibitem{Eldisk}StackExchange, \textit{Solutions to $A^N+B^N=C^N\pm 1$, for
    $N\ge 4$}, \url{https://math.stackexchange.com/questions/4016969/solutions-to-anbn-cn-pm-1-for-n-geq-4}.

\bibitem{Syd}N. Snyder, \textit{An alternate proof of Mason's theorem}, Elemente der Mathematik, 55(3) (1984), 93–94.
  
\bibitem{Sto}W. W. Stothers, \textit{Polynomial identities and hauptmoduln}, Quarterly J. Math. Oxford, 2(32) (1981) 349–370.
 
\bibitem{TP}Turán P., \textit{Egy különös életút, Ramanujan}, in Hungarian [\textit{Intriguing life of Ramanujan}], can be found in ``Nagy Pillanatok a Matematika Történetében'' [Great Moments in the History of Mathematics], ed. R. Freud, Gondolat, Budapest, 1981.

\bibitem{Voloch}J. F. Voloch, \textit{Diagonal equations over function fields}, Bol. Soc. Brasil. Math. 16 (1985), 29-39.

\bibitem{Wrons}Wikipedia, \textit{Wronskian}, \url{https://en.wikipedia.org/wiki/Wronskian}.

\bibitem{Zannier}U. Zannier, \textit{Some remarks on the S-unit equation in function fields}, Acta Arith. 64 (1993), 87-98.

\end{thebibliography}
\end{document}